\documentclass[11pt]{article}

\usepackage{arxiv}
\usepackage{natbib}

\usepackage{amsmath,amsfonts,bm}

\def\eqref#1{equation~\ref{#1}}

\def\1{\bm{1}}

\DeclareMathAlphabet{\mathsfit}{\encodingdefault}{\sfdefault}{m}{sl}
\SetMathAlphabet{\mathsfit}{bold}{\encodingdefault}{\sfdefault}{bx}{n}

\def\sR{{\mathbb{R}}}

\usepackage{url}
\usepackage{xcolor}
\usepackage{tikz}
\usetikzlibrary{patterns}

\usetikzlibrary{arrows.meta}

\usepackage{comment}
\usepackage[page]{appendix}
\RequirePackage[american]{babel}
\RequirePackage{csquotes}
\RequirePackage[T1]{fontenc}

\usepackage{microtype}
\usepackage{graphicx}
\usepackage{subcaption}
\usepackage{booktabs} 

\usepackage{hyperref}
\hypersetup{
  colorlinks=true,
  citecolor=blue,
  urlcolor=cyan,
  linkcolor=magenta,
}

\usepackage{amsmath}
\usepackage{amssymb}
\usepackage{mathtools}
\usepackage{amsthm}

\usepackage{algorithm}
\usepackage{algpseudocode} 

\usepackage{multirow}
\usepackage{array}
\usepackage{comment}
\usepackage{enumitem}

\newtheorem{problem}{Problem}

\newcommand{\CBGC}{\hyperref[prob:cbgc]{\textsc{CBGC}}}
\newcommand{\GQGC}{\hyperref[prob:gqgc]{\textsc{GQGC}}}

\newcommand{\GC}{%
  \hyperref[prob:gap-convexity]{%
    \textsc{Gap-Convexity}}}

\newcommand{\SC}{%
  \hyperref[prob:signomial-convexity]{%
    \textsc{Signomial-Convexity}}}

\newcommand{\LCC}{%
  \hyperref[prob:log-coordinate-convexity]{%
    \textsc{Log-Coordinate-Convexity}}}

\newcommand{\LLC}{%
  \hyperref[prob:log-log-convexity]{%
    \textsc{Log-Log-Convexity}}}

\newcounter{problemvariant}[problem]

\newlength{\problempairsep}
\newtheorem{theorem}{Theorem}[section]

\newtheorem{lemma}[theorem]{Lemma}

\usepackage[textsize=tiny]{todonotes}

\usepackage{adjustbox}
\usepackage{mathrsfs}
\usepackage{stmaryrd}
\usepackage{bm}

\RequirePackage{booktabs}

\RequirePackage{siunitx}
\renewcommand{\headeright}{}

\newcommand\blfootnote[1]{%
  \begingroup
  \renewcommand\thefootnote{}\footnote{#1}%
  \addtocounter{footnote}{-1}%
  \endgroup
}

\usepackage[capitalize,noabbrev]{cleveref}

\title{Recognizing Signomial Convexity is Hard}

\usepackage{footmisc}
\usepackage{authblk}

\author[1]{Rui Zheng}
\author[1]{Iosif Sakos}
\author[1,2,3]{Antonios Varvitsiotis}
\affil[1]{Singapore University of Technology and Design, Singapore}
\affil[2]{Centre for Quantum Technologies, National University of Singapore, Singapore}
\affil[3]{Archimedes/Athena Research Center, Greece}
\date{\vspace{-8ex}}

\begin{document}

\maketitle
\blfootnote{Contact: \texttt{rui\_zheng@mymail.sutd.edu.sg}, \texttt{iosif\_sakos@sutd.edu.sg}, \texttt{antonios@sutd.edu.sg}}

\begin{abstract}
Signomials---finite sums of generalized monomials with real exponents---are a natural machine learning model, combining parsimonious representations of power laws, inverse relationships, and multiplicative interactions with universal approximation and interpretable parameters. In fact, 45 of the 100 equations in the AI Feynman benchmark admit signomial representations. Moreover, signomial optimization is widely used in engineering design, communications, economics, and machine learning, but is computationally intractable in general. Convexity, the gold standard for efficient optimization, offers three routes to tractability for signomials: a signomial may be convex in its original variables, become convex after a logarithmic change of variables, or, when positive, become convex after additionally taking the logarithm of its value---the structure underlying disciplined geometric programming. We show that recognizing each form is strongly NP-hard, both on compact domains and globally. Our proofs start from gap-promise variants of polynomial convexity and develop polynomial-to-signomial reductions that preserve curvature gaps under the relevant logarithmic transformations.
\end{abstract}

\section{Introduction}
\label{sec:introduction}

Signomials are finite linear combinations of generalized monomials with real exponents.
Formally, a signomial in $n$ variables is a function 
\begin{equation}
\label{eq:signomial_in_x}
        f(x)
    =
    \sum_{\alpha\in A}
    c_{\alpha}
    \prod_{j=1}^{n}x_j^{\alpha_j},
    \qquad
    x_1,\ldots,x_n>0,
\end{equation}
where $A\subset\sR^n$ is finite and $c_{\alpha}\in\sR$. In particular, signomials substantially generalize polynomial models by allowing fractional and negative powers: expressions such as \(\sqrt{x_1}\), \(x_2^{-1}=1/x_2\), and \(x_1^{3/2}x_2^{-1/3}\) are all generalized monomials.
Because arbitrary real powers need not be real-valued at nonpositive arguments, the natural domain of a signomial is the positive orthant $\sR_{++}^n$.

Signomials are a natural model class for scientific equation discovery. The AI Feynman benchmark contains \(100\) equations drawn from the three volumes of \emph{The Feynman Lectures on Physics}~\citep{udrescu2020ai},  45 of which admit signomial representations~\citep{lumadjeng2026ecsel}. This prevalence reflects the ability of signomials to represent power laws, inverse relationships, roots, and multiplicative interactions exactly. For machine-learning applications, signomials combine parsimony, interpretability, and expressive power~\citep{karcher2023data,lee2014sparse,hwang2015multi,lumadjeng2026ecsel}. First, relationships such as \(\sqrt{x}\) and \(1/x\) require a single signomial term, whereas polynomials and neural networks can approximate them but do not explicitly encode their power-law structure. Second, the model parameters (the exponents) have direct interpretations as elasticities. Third, signomials inherit the universal-approximation property of polynomials on compact subsets of \(\sR_{++}^n\).

The problem of minimizing a signomial \(f(x)\) over \(x\in\sR_{++}^n\), subject to signomial constraints \(f_i(x)\leq 1\) and \(g_i(x)=1\), is called a \emph{signomial optimization problem}~\citep[Section~9.1]{boyd2007tutorial}. Such problems arise in machine learning, wireless communications, inventory management, biotechnology, aerospace engineering, and chemical engineering~\citep{karcher2023data,fevotte2011algorithms,zimmert2021tsallis,chiang2007power,chiu2007optimal,san2022inventory,marin2007optimization,kirschen2018application,bjork2002global,dressler2022algebraic,maranas1997global,lee2014sparse,hwang2015multi,lumadjeng2026ecsel}.

This expressiveness, however, comes with a computational cost: signomial optimization is NP-hard in general~\citep[see, e.g.,][]{chandrasekaran2016relative}. This follows already from matrix copositivity: deciding whether \(x^\top Qx\geq 0\) for every \(x\geq 0\) is coNP-complete~\citep{murty1987some}. Consequently, deciding whether the quadratic signomial \(x^\top Qx\) attains a negative value is also hard.

The hardness of signomial optimization motivates the search for tractable subclasses, foremost among them convex optimization problems.
For signomials, however, convexity can arise in three distinct forms.
First, $f$ may be convex in its original positive variables $x$.
Second, after the logarithmic reparameterization $x=e^y$, the function $y\mapsto f(e^y)$ may be convex on $\sR^n$; we call this \emph{log-coordinate convexity}.
Third, when $f>0$, the function $y\mapsto\log f(e^y)$ may be convex; this property is known as \emph{log-log convexity}.
Figure~\ref{fig:convexity-notions} summarizes these three notions and their relationships.

\begin{figure}[!ht]
\centering
\resizebox{\textwidth}{!}{%
\begin{tikzpicture}[x=1cm,y=1cm]

\definecolor{panelborder}{RGB}{63,83,105}
\definecolor{blueedge}{RGB}{30,88,165}
\definecolor{bluefill}{RGB}{222,235,250}
\definecolor{tealedge}{RGB}{20,115,120}
\definecolor{tealfill}{RGB}{220,244,242}
\definecolor{goldedge}{RGB}{190,125,0}
\definecolor{goldfill}{RGB}{255,249,235}


\draw[
    rounded corners=8pt,
    line width=0.9pt,
    draw=panelborder,
    fill=gray!3
]
(0.00,1.92) rectangle (6.35,6.15);

\draw[
    rounded corners=8pt,
    line width=0.9pt,
    draw=panelborder,
    fill=gray!3
]
(8.65,1.92) rectangle (15.00,6.15);


\node[
    font=\bfseries\normalsize,
    align=center
] at (3.175,5.82)
    {Signomials in original coordinates};

\node[
    font=\normalsize,
    align=center
] at (3.175,4.88)
    {$\displaystyle
      f(x)=\sum_{\alpha\in A}c_\alpha x^\alpha,
      \qquad x\in\mathbb{R}_{++}^{n}$};

\draw[
    line width=0.9pt,
    draw=blueedge,
    fill=bluefill
]
(3.175,3.28) ellipse (2.28 and 0.95);

\node[
    font=\bfseries\normalsize,
    text=blueedge
] at (3.175,3.28)
    {convex};


\node[
    font=\normalsize,
    text=blueedge
] at (7.50,4.50)
    {$x=e^y$};

\draw[
    <->,
    line width=1.3pt,
    draw=panelborder
]
(6.72,4.02) -- (8.28,4.02);


\node[
    font=\bfseries\normalsize,
    align=center
] at (11.825,5.82)
    {Signomials in log coordinates};

\draw[
    line width=0.9pt,
    draw=tealedge,
    fill=tealfill
]
(11.825,3.58) ellipse (2.72 and 1.33);

\node[
    font=\bfseries\normalsize,
    text=tealedge
] at (11.825,4.28)
    {log-coordinate convex};

\draw[
    line width=0.9pt,
    draw=blueedge,
    fill=bluefill
]
(11.825,3.18) ellipse (1.82 and 0.74);

\node[
    align=center,
    font=\bfseries\normalsize,
    text=blueedge
] at (11.825,3.18)
    {log-log convex\\[-1pt]
     \normalfont\itshape $(f>0)$};

\draw[
    rounded corners=8pt,
    line width=0.9pt,
    draw=goldedge,
    fill=goldfill
]
(0.05,0.65) rectangle (15.05,1.68);

\draw[
    line width=0.6pt,
    draw=goldedge
]
(2.65,0.75) -- (2.65,1.58);

\node[
    align=center,
    font=\bfseries\normalsize
] at (1.425,1.165)
    {Recognition\\complexity};

\node[
    align=center,
    font=\normalsize
] at (8.90,1.165)
    {Determining whether a given signomial is convex, log-coordinate convex,\\
    or log-log convex is strongly NP-hard both on compact domains and globally.};

\end{tikzpicture}%
}
\caption{
The three notions of convexity for signomials and their recognition complexity.
}
\label{fig:convexity-notions}
\end{figure}

Log-log convexity plays a central role in disciplined geometric programming (DGP)~\citep{agrawal2019disciplined}, much as ordinary convexity does in disciplined convex programming~\citep{grant2006disciplined}.
DGP begins with a library of atoms whose log-log curvature---affine, convex, or concave---is known in advance.
It then uses curvature-preserving composition rules, guaranteeing that every compliant problem becomes convex after a logarithmic change of variables.
Classical geometric programming is a canonical special case: applying the logarithmic transformation to a posynomial---a signomial with positive coefficients---produces a log-sum-exp function~\citep{boyd2007tutorial}.

However, signomial models learned from data need not satisfy the curvature conditions enforced by these composition rules.
Regression and equation-learning methods estimate their coefficients and exponents directly from observations~\citep{karcher2023data,lee2014sparse,lumadjeng2026ecsel}, so the resulting models carry no a priori guarantee of convexity in any of the three senses above. This leads to the central question of this paper: \emph{Can we efficiently determine whether a given signomial is convex, log-coordinate convex, or log-log convex?}

\paragraph{Contributions.}
We give a strongly negative answer to our central question: recognizing each of the three forms of convexity is strongly NP-hard, both on compact domains and globally; see Figure~\ref{fig:convexity-notions}. These statements correspond to six distinct recognition problems. In the original variables \(x\), compact-domain hardness concerns a box contained in \(\sR_{++}^n\), whereas global hardness concerns the entire positive orthant. For log-coordinate and log-log convexity, the transformed variable \(y=\log x\) belongs to \(\sR^n\), so the corresponding domains are a compact box in \(\sR^n\) and all of \(\sR^n\), respectively. Importantly, compact-domain hardness does not immediately imply global hardness: a function that is convex on a box need not remain convex outside it, so the positive instances of the compact problem are not necessarily positive instances of the global problem.

Our reductions are from strongly NP-hard gap-promise problems for polynomial convexity derived from the results of \cite{ahmadi2013np,ahmadi2020complexity}. Hardness in the original variables follows easily from the polynomial case: an affine change of variables maps the relevant polynomial instances to signomials over the positive orthant (Theorem~\ref{thm:original-coordinate-hardness}). However, hardness for log-coordinate and log-log convexity is more delicate: exponential reparameterization alters curvature, while the additional logarithm in log-log convexity introduces a gradient-dependent term. Our main technical contribution is a family of polynomial-to-signomial reductions that preserves the curvature gap under these transformations.

Section~\ref{sec:convexity-problems-and-gap-reductions} develops the gap-preserving reduction framework used to control these curvature changes. Section~\ref{sec:compact-hardness} applies it to prove strong NP-hardness of recognizing log-coordinate convexity~(Theorem~\ref{thm:compact-log-coordinate}) and log-log convexity~(Theorem~\ref{thm:compact-log-log}) on the unit box. Finally, Section~\ref{sec:unconstrained-hardness} establishes the corresponding global results over \(\sR^n\)~(Theorems~\ref{thm:global-log-coordinate} and~\ref{thm:global-log-log-convexity-hardness}).

\paragraph{Related work.}
To the best of our knowledge, no prior work has studied the computational complexity of recognizing convexity, log-coordinate convexity, or log-log convexity of signomials. The relevant hardness results concern polynomial convexity. \cite{ahmadi2013np} proved that deciding whether a quartic polynomial is globally convex is strongly NP-hard, while \cite{ahmadi2020complexity} established strong NP-hardness for cubic polynomials over a box. These results underlie the global quartic and cubic box gap-convexity problems used in our reductions and yield direct reductions to signomial convexity in the original variables. They do not, however, settle the complexity of log-coordinate or log-log convexity: exponential reparameterization changes curvature, and the additional outer logarithm in the log-log setting introduces a gradient-dependent term.

Related work on signomial optimization has largely focused on identifying tractable subclasses or constructing global relaxations. Disciplined geometric programming provides a compositional framework for constructing problems whose convexity after a logarithmic transformation is certified in advance~\citep{boyd2007tutorial,agrawal2019disciplined}.  A complementary line of work develops global optimization methods based on SAGE certificates and relative-entropy relaxations~\citep{chandrasekaran2016relative,murray2021newton,murray2021signomial,wang2020positivstellensatz,dressler2022algebraic}. These methods certify signomial nonnegativity and construct hierarchies of convex relaxations for approximating global optima, but do not address recognition of the three convexity notions studied here.

\section{Convexity Problems and Reduction Framework}
\label{sec:convexity-problems-and-gap-reductions}

\subsection{Gap Convexity: definition and hardness}
\label{subsec:gap-convexity}

Let \(\Omega\subseteq\mathbb{R}^n\) be a convex set given by a finite
description, and let \(\mathcal{F}(\Omega)\) be a finitely encoded class of
twice continuously differentiable functions \(p:\Omega\to\mathbb{R}\).
For \(p\in\mathcal{F}(\Omega)\), define
\[
    \kappa_\Omega(p)
    =
    \inf_{\xi\in \Omega}
    \lambda_{\min}\!\bigl(\nabla^2 p(\xi)\bigr).
\]

\begin{problem}[\textsc{Gap-Convexity}]
\label{prob:gap-convexity}
The input consists of a function \(p\in\mathcal{F}(\Omega)\) and two positive
rational thresholds \(\gamma_+\) and \(\gamma_-\). It is promised that
exactly one of the following alternatives holds:
\[
\begin{aligned}
    \textsc{Yes:}\quad&
    \kappa_\Omega(p)\geq\gamma_+,\\
    \textsc{No:}\quad&
    \kappa_\Omega(p)\leq-\gamma_-.
\end{aligned}
\]
The task is to decide which alternative holds.
\end{problem}

The positive alternative implies that \(p\) is uniformly
\(\gamma_+\)-strongly convex on \(\Omega\), whereas the negative
alternative guarantees negative curvature and hence that \(p\) is not convex on \(\Omega\).

Given two gap-convexity problems, a reduction from the first to the second is a polynomial-time algorithm that maps an instance \(p\) of the first problem to an instance \(q\) of the second, so that solving the latter determines which alternative holds for \(p\). For such a reduction, suppose that
\[
    \bigl|\kappa_\Omega(q)-\kappa_\Omega(p)\bigr|\leq E.
\]
If \(p\) satisfies the positive alternative
\(\kappa_\Omega(p)\geq\gamma_+\), then
\(\kappa_\Omega(q)\geq\gamma_+-E\). Similarly, if \(p\) satisfies the
negative alternative \(\kappa_\Omega(p)\leq-\gamma_-\), then
\(\kappa_\Omega(q)\leq-\gamma_-+E\). Figure~\ref{fig:gap-preserving-reduction}
illustrates this mechanism. To preserve the sign of the curvature in
both alternatives, we require
    $E<\min\{\gamma_+,\gamma_-\}$.
In particular, choosing
    $E\leq\frac{1}{2}\min\{\gamma_+,\gamma_-\}$
retains curvature margins of at least \(\gamma_+/2\) and
\(\gamma_-/2\), respectively. The factor \(1/2\) is only for
convenience; any fixed fraction below one would suffice.

\begin{figure}[h]
\centering

\begin{tikzpicture}[x=1.05cm,y=1cm]

\def\xL{-2.25}
\def\xR{2.25}
\def\xp{1.55}     
\def\xq{1.05}     
\def\hh{0.10}

\begin{scope}
    \node[above=8pt] at (0,0.25)
        {\(\kappa_\Omega(p)\geq\gamma_+\)};

    \draw[->,thick]
        (\xL,0) -- (\xR+0.15,0);

    \draw[densely dashed,gray]
        (0,-0.25) -- (0,0.25);
    \node[below=3pt] at (0,-0.25) {\(0\)};

    \fill[blue!18!gray!25]
        (\xp,-\hh) rectangle (\xR,\hh);
    \draw[thick,blue!50!black]
        (\xp,0) -- (\xR,0);

    \draw[dashed,gray!70!black]
        (\xp,-0.28) -- (\xp,0.28);
    \fill[blue!50!black]
        (\xp,0) circle (1.7pt);

    \node[below=3pt] at (\xp,-0.28)
        {\(\gamma_+\)};
\end{scope}

\draw[
    -{Stealth[length=3.2mm,width=2.3mm]},
    line width=1.4pt,
    blue!55!black,
    double=teal!25,
    double distance=1.1pt
]
    (2.55,0.03)
    to[bend left=20]
    node[
        midway,
        above=5pt,
        fill=white,
        rounded corners=2pt,
        inner sep=2.5pt,
        text=black
    ] {\scriptsize reduction}
    (3.75,0.03);

\begin{scope}[shift={(6.3,0)}]
    \node[above=8pt] at (0,0.25)
        {\(\kappa_\Omega(q)\geq\gamma_+-E\)};

    \draw[->,thick]
        (\xL,0) -- (\xR+0.15,0);

    \draw[densely dashed,gray]
        (0,-0.25) -- (0,0.25);
    \node[below=3pt] at (0,-0.25) {\(0\)};

    \fill[teal!18]
        (\xq,-\hh) rectangle (\xR,\hh);
    \fill[
        pattern=north east lines,
        pattern color=teal!70!black
    ]
        (\xq,-\hh) rectangle (\xR,\hh);

    \draw[thick,teal!60!black]
        (\xq,0) -- (\xR,0);

    \draw[densely dotted,thick,teal!70!black]
        (\xq,-0.28) -- (\xq,0.28);
    \fill[teal!60!black]
        (\xq,0) circle (1.7pt);

    \node[below=3pt] at (\xq,-0.28)
        {\(\gamma_+-E\)};
\end{scope}

\end{tikzpicture}

\vspace{1.5em}

\begin{tikzpicture}[x=1.05cm,y=1cm]

\def\xL{-2.25}
\def\xR{2.25}
\def\xp{-1.55}    
\def\xq{-1.05}    
\def\hh{0.10}

\begin{scope}
    \node[above=8pt] at (0,0.25)
        {\(\kappa_\Omega(p)\leq-\gamma_-\)};

    \draw[->,thick]
        (\xL,0) -- (\xR+0.15,0);

    \draw[densely dashed,gray]
        (0,-0.25) -- (0,0.25);
    \node[below=3pt] at (0,-0.25) {\(0\)};

    \fill[blue!18!gray!25]
        (\xL,-\hh) rectangle (\xp,\hh);
    \draw[thick,blue!50!black]
        (\xL,0) -- (\xp,0);

    \draw[dashed,gray!70!black]
        (\xp,-0.28) -- (\xp,0.28);
    \fill[blue!50!black]
        (\xp,0) circle (1.7pt);

    \node[below=3pt] at (\xp,-0.28)
        {\(-\gamma_-\)};
\end{scope}

\draw[
    -{Stealth[length=3.2mm,width=2.3mm]},
    line width=1.4pt,
    blue!55!black,
    double=teal!25,
    double distance=1.1pt
]
    (2.55,0.03)
    to[bend left=20]
    node[
        midway,
        above=5pt,
        fill=white,
        rounded corners=2pt,
        inner sep=2.5pt,
        text=black
    ] {\scriptsize reduction}
    (3.75,0.03);

\begin{scope}[shift={(6.3,0)}]
    \node[above=8pt] at (0,0.25)
        {\(\kappa_\Omega(q)\leq-\gamma_-+E\)};

    \draw[->,thick]
        (\xL,0) -- (\xR+0.15,0);

    \draw[densely dashed,gray]
        (0,-0.25) -- (0,0.25);
    \node[below=3pt] at (0,-0.25) {\(0\)};

    \fill[teal!18]
        (\xL,-\hh) rectangle (\xq,\hh);
    \fill[
        pattern=north east lines,
        pattern color=teal!70!black
    ]
        (\xL,-\hh) rectangle (\xq,\hh);

    \draw[thick,teal!60!black]
        (\xL,0) -- (\xq,0);

    \draw[densely dotted,thick,teal!70!black]
        (\xq,-0.28) -- (\xq,0.28);
    \fill[teal!60!black]
        (\xq,0) circle (1.7pt);

    \node[below=3pt] at (\xq,-0.28)
        {\(-\gamma_-+E\)};
\end{scope}

\end{tikzpicture}

\caption{
In the positive alternative, the lower curvature bound decreases by
at most \(E\). In the negative alternative, the negative-curvature
threshold increases by at most \(E\).}
\label{fig:gap-preserving-reduction}
\end{figure}

We base our reductions on the following two restrictions of
\GC{}:

\begin{description}
    \item[(\textsc{CBGC})]\phantomsection\label{prob:cbgc}
    \emph{Cubic box gap convexity:}
    \(\Omega=[-1,1]^n\) and
    \(p:\mathbb{R}^n\to\mathbb{R}\) is a cubic polynomial.

    \item[(\textsc{GQGC})]\phantomsection\label{prob:gqgc}
    \emph{Global quartic gap convexity:}
    \(\Omega=\mathbb{R}^n\) and
    \(p:\mathbb{R}^n\to\mathbb{R}\) is a quartic polynomial.
\end{description}

Both \CBGC{} and \GQGC{} are strongly NP-hard.
\cite{ahmadi2020complexity} established hardness of \CBGC{}
with inverse-polynomial curvature margins:
$
    \gamma_+,\gamma_-
    \geq \frac{1}{\operatorname{poly}(n)}.
$
For \GQGC{}, \cite{ahmadi2013np} established hardness with
the one-sided gap
$
    \gamma_+=0,
    \gamma_-\geq\frac{1}{\operatorname{poly}(n)}.$
In Appendix~\ref{app:quartic-gap-proof}, we modify their construction
to obtain $
    \gamma_+,\gamma_-
    \geq\frac{1}{\operatorname{poly}(n)},
$
as required by our gap-preserving reductions. 

In both cases, the hardness persists even when every coefficient
\(a/b\) of \(p\), written in lowest terms with \(b>0\), satisfies $|a|,b\leq\operatorname{poly}(n)$.
This is the meaning of the ``strong'' qualification in our hardness
statements: the coefficient numerators and denominators are
polynomially bounded, and the curvature margins are
inverse-polynomial in \(n\). Thus, the hardness does not rely on
exponentially large coefficients or exponentially small curvature
gaps.

\subsection{Signomial convexity-recognition problems}
\label{subsec:signomial-convexity-recognition-problems}

We study three signomial convexity-recognition problems, each with a constrained and an unconstrained variant. Throughout, \(f\) is a
signomial as in~\autoref{eq:signomial_in_x}, with rational coefficients
and rational exponents encoded in binary. Whenever \(f(e^y)\) appears,
exponentiation is applied componentwise.
In the constrained variants,
the input additionally includes a finitely described compact convex set \(X\subseteq\mathbb{R}_{++}^n\) or
\(Y\subseteq\mathbb{R}^n\), as appropriate.

\begin{problem}[\textsc{Signomial-Convexity}]
\label{prob:signomial-convexity}
Given a signomial \(f\), consider the following two variants:
\begin{enumerate}
\item[\textup{(i)}] \emph{Constrained:} Decide whether \(f\) is convex on \(X\).
\item[\textup{(ii)}] \emph{Unconstrained:} Decide whether \(f\)
is convex on \(\mathbb{R}_{++}^n\).
\end{enumerate}
\end{problem}

\begin{problem}[\textsc{Log-Coordinate-Convexity}]
\label{prob:log-coordinate-convexity}
Given a signomial \(f\), consider the following two variants:
\begin{enumerate}
\item[\textup{(i)}] \emph{Constrained:} Decide whether
\(y\mapsto f(e^y)\) is convex on \(Y\).
\item[\textup{(ii)}] \emph{Unconstrained:} Decide whether
\(y\mapsto f(e^y)\) is convex on \(\mathbb{R}^n\).
\end{enumerate}
\end{problem}

\begin{problem}[\textsc{Log-Log-Convexity}]
\label{prob:log-log-convexity}
Given a signomial \(f\), under the promise that \(f>0\)
throughout the relevant domain, consider the following two variants:
\begin{enumerate}
\item[\textup{(i)}] \emph{Constrained:} Decide whether
\(y\mapsto\log f(e^y)\) is convex on \(Y\).
\item[\textup{(ii)}] \emph{Unconstrained:} Decide whether
\(y\mapsto\log f(e^y)\) is convex on \(\mathbb{R}^n\).
\end{enumerate}
\end{problem}

Define \(g(y)=f(e^y)\). Accordingly, \LCC{} asks whether \(g\) is convex, whereas, under the promise that \(g>0\), \LLC{} asks whether \(\log g\) is convex.
Thus, \(f\) is log-log convex precisely when its exponential form
\(g\) is log-convex.

The two \textsc{Signomial-Convexity} problems admit direct reductions
from \CBGC{} and \GQGC{}. The constrained reduction uses an affine
bijection from a positive box onto the unit box, while the
unconstrained reduction uses the linear map \((u,v)\mapsto u-v\) from
\(\mathbb{R}_{++}^{2n}\) onto \(\mathbb{R}^n\). Both substitutions
produce ordinary polynomials, and hence signomials on the positive
orthant, so no approximation construction is needed.

\begin{theorem}
\label{thm:original-coordinate-hardness}
Both the constrained and unconstrained \SC{} problems are strongly NP-hard.
\end{theorem}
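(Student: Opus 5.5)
We reduce \CBGC{} to the constrained variant and \GQGC{} to the unconstrained variant of \SC{}. In both cases the substitution is affine, so the Hessian transforms by congruence with a fixed matrix. Curvature signs are therefore preserved exactly, and no approximation argument is needed.

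\emph{Constrained case.} Let \((p,\gamma_+,\gamma_-)\) be a \CBGC{} instance with cubic \(p\) on \([-1,1]^n\). Set \(X=[1,3]^n\subseteq\mathbb{R}_{++}^n\) and \(f(x)=p(x-\1)\). Expanding, \(f\) is a cubic polynomial in \(x\), hence a signomial with nonnegative integer exponents. Its coefficients are integer combinations of those of \(p\), with multipliers bounded by binomial factors of size \(O(1)\) per monomial since the degree is \(3\). The number of monomials is \(O(n^3)\), and all coefficient numerators and denominators stay polynomially bounded (after clearing to a common bounded form). Since \(\nabla^2 f(x)=\nabla^2 p(x-\1)\) and \(x\mapsto x-\1\) maps \(X\) bijectively onto \([-1,1]^n\), we get \(\kappa_X(f)=\kappa_{[-1,1]^n}(p)\). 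In the \textsc{Yes} case \(f\) is convex on \(X\). In the \textsc{No} case some point has a negative Hessian eigenvalue, so \(f\) is not convex on \(X\); convexity on a convex set with nonempty interior is equivalent to a PSD Hessian by continuity up to the boundary. An algorithm for constrained \SC{} would therefore decide \CBGC{}.

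\emph{Unconstrained case.} Let \(p\) be a \GQGC{} quartic on \(\mathbb{R}^n\). Define \(f(u,v)=p(u-v)\) on \(\mathbb{R}_{++}^{2n}\); expanding gives a quartic polynomial with polynomially many terms and polynomially bounded coefficients. Writing \(L=[I\;\,-I]\), we have \(\nabla^2 f(u,v)=L^\top\nabla^2p(u-v)L\). If \(\nabla^2 p\succeq\gamma_+ I\succeq0\) everywhere, then \(\nabla^2 f\succeq0\) and \(f\) is convex on \(\mathbb{R}_{++}^{2n}\). Conversely, suppose \(\lambda_{\min}(\nabla^2p(\xi))\le-\gamma_-\) with unit eigenvector \(w\) at some \(\xi\in\mathbb{R}^n\). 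The map \((u,v)\mapsto u-v\) is onto \(\mathbb{R}^n\) from the positive orthant: take \(u=\xi^++\1\) and \(v=\xi^-+\1\), where \(\xi^\pm\) are the positive and negative parts of \(\xi\). Choosing the direction \(d=(w,0)\) gives \(d^\top\nabla^2 f\,d=w^\top\nabla^2p(\xi)w\le-\gamma_-<0\), so \(f\) is not convex. Thus unconstrained \SC{} decides \GQGC{}. Note that \(\kappa\) of \(f\) itself is \(0\) in the \textsc{Yes} case because \(L\) has a kernel, but only the exact decision is required here, so this does not matter.

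\emph{Strongness.} This is the only point needing care. Expanding \(p(x-\1)\) or \(p(u-v)\) must not blow up the encoding beyond polynomial size or introduce large numbers. Because the degree is constant (\(3\) or \(4\)), each coefficient of \(f\) is a sum of \(O(n^{4})\) original coefficients times constants at most \(4!\). Over a common denominator, which is polynomially bounded if we first scale \(p\) by the product of the polynomially many distinct denominators (positive scaling preserves the alternatives), the numerators remain polynomially bounded. Hence both variants are strongly NP-hard.
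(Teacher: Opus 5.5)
Your strategy is the same as the paper's. You use an affine substitution of a \CBGC{} instance for the constrained case, and $f(u,v)=p(u-v)$ on $\mathbb{R}_{++}^{2n}$ with test direction $(w,0)$ for the unconstrained case. Your unconstrained argument is correct as written, including the explicit preimage $u=\xi^++\1$, $v=\xi^-+\1$.

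\textbf{Constrained case.} There is an off-by-one error that breaks the reduction as stated. The map $x\mapsto x-\1$ sends $X=[1,3]^n$ onto $[0,2]^n$, not onto $[-1,1]^n$. So $f(x)=p(x-\1)$ gives $\kappa_X(f)=\kappa_{[0,2]^n}(p)$, and the \CBGC{} promise says nothing about $[0,2]^n$. For example, $p(t)=4t^2-t^3$ has $p''=8-6t\ge 2$ on $[-1,1]$, but $p''(2)=-4$, so your reduction would answer wrongly. The fix is immediate: take $f(x)=p(x-2\1)$ on $[1,3]^n$, or the paper's $f(x)=p(2x-3\1)$ on $[1,2]^n$, whose Hessian is $4\nabla^2p(2x-3\1)$.

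\textbf{Strongness.} Your strongness paragraph contains a false step. Scaling $p$ by the product, or even the least common multiple, of its distinct denominators does not keep numerators polynomially bounded. Polynomially many integers in $[1,\operatorname{poly}(n)]$ can have exponentially large lcm; for instance, $\operatorname{lcm}(1,\dots,N)=e^{(1+o(1))N}$.
\begin{itemize}
\item In the unconstrained reduction no common denominator is needed. Each monomial $u^\beta v^\gamma$ of $p(u-v)$ arises only from the monomial $\xi^{\beta+\gamma}$ of $p$. Its coefficient is therefore $\pm\prod_i\binom{\beta_i+\gamma_i}{\beta_i}\,p_{\beta+\gamma}$: a single coefficient of $p$ times an integer at most $6$.
\item In the constrained reduction, translation genuinely produces sums of polynomially many coefficients of $p$. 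The paper only asserts polynomial encoding length there. A clean way to get honest magnitude bounds is to reuse the $u-v$ trick on $X=[1,2]^{2n}\subseteq\mathbb{R}_{++}^{2n}$, which $(u,v)\mapsto u-v$ maps onto $[-1,1]^n$.
\end{itemize}
With $X=[1,2]^{2n}$, the argument goes through as follows. In the \textsc{Yes} case, $\nabla^2 f=[I\;\,-I]^\top\nabla^2p\,[I\;\,-I]\succeq0$ on $X$. In the \textsc{No} case, a witness $\xi\in[-1,1]^n$ lifts to $u=\1+\xi^+$, $v=\1+\xi^-$, both in $[1,2]^n$, with direction $(w,0)$; your continuity remark handles witnesses on the boundary. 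Every coefficient of $f$ is then a single coefficient of $p$ times an integer at most $3$.
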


\begin{proof}
For the constrained problem, let \(p:\mathbb{R}^n\to\mathbb{R}\) be
an instance of \CBGC{} and define
$X=[1,2]^n$,
$T(x)=2x-3\mathbf{1}$,
$f(x)=p(T(x))$.
The affine map \(T\) sends \(X\) bijectively onto \([-1,1]^n\), and
$\nabla^2 f(x)=4\nabla^2p(T(x))$.
Consequently, \(f\) is convex on \(X\) if and only if \(p\) is convex
on \([-1,1]^n\). Moreover, \(f\) is a polynomial and hence a signomial
on \(\mathbb{R}_{++}^n\).

For the unconstrained problem, let
\(p:\mathbb{R}^n\to\mathbb{R}\) be an instance of \GQGC{} and define
$f(u,v)=p(u-v)$,
$(u,v)\in\mathbb{R}_{++}^{2n}$.
If \(p\) is convex on \(\mathbb{R}^n\), then \(f\) is convex as its
composition with a linear map. Conversely, suppose that \(f\) is
convex on \(\mathbb{R}_{++}^{2n}\). For any \(x,d\in\mathbb{R}^n\),
choose \(u,v\in\mathbb{R}_{++}^n\) such that \(u-v=x\). 
Let $z=(d^\top,0)^\top$, then
$z^\top
\nabla^2f(u,v)
z
=
d^\top\nabla^2p(x)d
\geq0$.
Since \(x\) and \(d\) are arbitrary, \(p\) is convex on
\(\mathbb{R}^n\). Thus, \(f\) is convex on
\(\mathbb{R}_{++}^{2n}\) if and only if \(p\) is convex on
\(\mathbb{R}^n\).

Both transformations are computable in polynomial time, preserve
fixed degree, and produce coefficients of polynomial encoding length.
Strong NP-hardness therefore follows.
\end{proof}

Unlike \textsc{Signomial-Convexity}, neither transformed problem inherits hardness directly from polynomial convexity. The exponential change of variables does not preserve curvature; for log-log convexity, taking the logarithm further changes the curvature condition and requires positivity.

The preceding discussion in
Section~\ref{subsec:gap-convexity} motivates a
\emph{gap-preserving reduction}. Given an instance $p$ of a known
hard gap-convexity problem, we seek to construct in polynomial time a function $q$ for the target recognition problem such that
    $|\kappa_Y(q)-\kappa_Y(p)|
    \leq E$,
    $E\leq\frac{1}{2}\min\{\gamma_+,\gamma_-\}$.
This bound preserves positive curvature in one alternative and
negative curvature in the other. Thus, an algorithm for the target recognition problem would solve the known hard gap-convexity problem.
If the construction also preserves the required encoding bounds,
strong NP-hardness transfers as well.
By Weyl's inequality~\citep{bhatia2013matrix}, it suffices to establish
\begin{equation}
\label{eq:hessian-perturbation}
    \sup_{\xi\in Y}
    \|\nabla^2q(\xi)-\nabla^2p(\xi)\|_2
    \le E.
\end{equation}

\subsection{A gap-preserving reduction framework}
\label{subsec:gap-preserving-exponential-approximation}

We now develop a gap-preserving reduction framework for the two
transformed problems. Given a polynomial \(p\), we construct a family
\(q_h\) that is the exponential form of a signomial and satisfies the
Hessian perturbation bound~\autoref{eq:hessian-perturbation} for
sufficiently small \(h>0\). The construction applies a coordinatewise
deformation $\phi_h$ that approaches the identity as \(h\to0\), so that the
Hessian of \(q_h\) uniformly approximates that of \(p\) on the unit
box. This approximation preserves the curvature gap.

\paragraph{Curvature requirements.}
Let \(\phi_h:\mathbb{R}\to\mathbb{R}\) be twice continuously
differentiable and depend on \(h>0\). Apply it coordinatewise as
\(\Phi_h(y)=(\phi_h(y_1),\ldots,\phi_h(y_n))^\top\), and set
\(q_h(y)=p(\Phi_h(y))\). 
Thus \(q_h\) is obtained by evaluating \(p\)
at a coordinatewise deformation of \(y\).

To understand how this deformation changes curvature, write
\(J_h(y)=\operatorname{Diag}(\phi_h'(y_1),\ldots,\phi_h'(y_n))\).
The Hessian chain rule gives
\(\nabla^2q_h
=
J_h^\top\nabla^2p(\Phi_h)J_h+\operatorname{Diag}(
\phi_h''(y_i)\partial_i p(\Phi_h(y)))_{i=1}^n\).
Adding and subtracting \(\nabla^2p(\Phi_h(y))\) yields
\begin{align}
\label{eq:hessian-error-decomp}
\nabla^2q_h(y)-\nabla^2p(y)
&=
\underbrace{
J_h(y)^\top\nabla^2p(\Phi_h(y))J_h(y)
-\nabla^2p(\Phi_h(y))
}_{\text{Jacobian error}}
\nonumber\\
&\quad+
\underbrace{
\nabla^2p(\Phi_h(y))-\nabla^2p(y)
}_{\text{argument error}}
+
\underbrace{
\operatorname{Diag}\!\left(
\phi_h''(y_i)\,
\partial_i p(\Phi_h(y))
\right)_{i=1}^n
}_{\text{second-derivative error}} .
\end{align}

\autoref{eq:hessian-error-decomp} identifies three sources of
error. Uniformly on \([-1,1]\), the argument error reflects the fact
that the Hessian of \(p\) is evaluated at \(\Phi_h(y)\) rather than at
\(y\); it therefore becomes small when \(\phi_h(t)\to t\). The
Jacobian error is caused by the local rescaling \(J_h(y)\) introduced by
the coordinate map, and becomes small when \(\phi_h'(t)\to1\). The
second-derivative error is generated by the nonlinearity of
\(\phi_h\), and becomes small when \(\phi_h''(t)\to0\).

To quantify the three errors in~\autoref{eq:hessian-error-decomp}, we measure how closely
\(\phi_h\) follows the identity map on \([-1,1]\).
Let
\(E_0(h)=\sup_{|t|\le1}|\phi_h(t)-t|\), which measures how far the
transformed argument can move from the original one;
\(E_1(h)=\sup_{|t|\le1}|\phi_h'(t)-1|\), which measures how far the
local scaling deviates from the identity; and
\(E_2(h)=\sup_{|t|\le1}|\phi_h''(t)|\), which measures the
nonlinearity of the coordinate map.
The first quantity also determines where the transformed argument can
lie. If \(E_0(h)\le1\), then
\(\Phi_h([-1,1]^n)\subseteq[-2,2]^n\), since
\(|\phi_h(t)|\le |t|+|\phi_h(t)-t|\le2\).
It therefore suffices to control the derivatives of \(p\) on the fixed
enlarged box \([-2,2]^n\).

We next introduce the derivative bounds needed to control the three errors. Let \(G_p\) be a rational upper bound on the gradient of \(p\): $G_p\ge\sup_{\xi\in[-2,2]^n}\|\nabla p(\xi)\|_\infty$, and let \(H_p\) be a rational upper bound on its Hessian:
$H_p\ge \sup_{\xi\in[-2,2]^n}\|\nabla^2p(\xi)\|_2$.
Since \(p\) is a polynomial, \(\nabla^2p\) is Lipschitz on the compact box \([-2,2]^n\). Let \(L_p\) be a rational Lipschitz bound satisfying
    $\|\nabla^2p(\xi)-\nabla^2p(\zeta)\|_2
    \le
    L_p\|\xi-\zeta\|_\infty$
for $\xi,\zeta\in[-2,2]^n$.
Thus \(L_p\) controls how rapidly the Hessian can vary across the box. 

These quantities control the three errors directly. The argument error is
at most \(L_pE_0(h)\), because \(\Phi_h(y)\) lies within \(E_0(h)\) of
\(y\). For the Jacobian error, writing \(J_h=I_n+\Delta_h\) gives
\(\|\Delta_h\|_2\le E_1(h)\), so its norm is bounded by
\(H_p(2E_1(h)+E_1(h)^2)\). Finally, the second-derivative error is
diagonal, and its spectral norm is at most \(G_pE_2(h)\). Combining the
three bounds gives
\begin{equation}
\label{eq:curvature-transfer-bound}
    \sup_{y\in[-1,1]^n}
    \|\nabla^2q_h(y)-\nabla^2p(y)\|_2
    \le
    L_pE_0(h)
    +
    H_p\bigl(2E_1(h)+E_1(h)^2\bigr)
    +
    G_pE_2(h).
\end{equation}
Hence, if \(E_0(h)\), \(E_1(h)\), and \(E_2(h)\) are all \(O(h)\),
then the entire Hessian perturbation is \(O(h)\).

\paragraph{Canonical exponential deformation.}
The error \(E_0(h)\) measures how closely \(\phi_h\) approximates the
identity, so we require \(\phi_h(t)\to t\) as \(h\to0\). At the same
time, \(\phi_h\) should be built from exponentials so that
\(p\circ\Phi_h\) is the exponential form of a signomial. The simplest
starting point is \(e^{ht}\). For small h, Taylor's first-order expansion gives
\(e^{ht}=1+ht+O(h^2)\). 
This suggests the choice
\begin{equation}
\label{eq:canonical-exponential-map}
    \phi_h(t)=\frac{e^{ht}-1}{h}.
\end{equation}
We then have
$({e^{ht}-1})/{h}=t+O(h)$, so \(\phi_h(t)\to t\) as \(h\to0\).
It remains to verify the derivative requirements. Direct
differentiation gives
\(\phi_h'(t)=e^{ht}\) and
\(\phi_h''(t)=he^{ht}\), so
\(\phi_h'(t)\to1\) and \(\phi_h''(t)\to0\) as \(h\to0\).
More precisely, for \(|t|\leq1\) and \(0<h\leq1/2\), Taylor's theorem
yields
\(E_0(h)\leq h\), \(E_1(h)\leq2h\), and \(E_2(h)\leq2h\).
Thus, all three errors are \(O(h)\), and substituting these bounds into~\autoref{eq:curvature-transfer-bound} shows that the Hessian perturbation is also \(O(h)\).

Moreover, substituting
\(\phi_h(y_i)=(e^{hy_i}-1)/h\) into the finitely many monomials of
\(p\) and expanding their integer powers expresses \(q_h=p\circ\Phi_h\)
as a finite sum of terms \(c_\ell e^{h\ell^\top y}\). Thus, for
rational \(h>0\), \(q_h\) is the exponential form of a signomial with
rational coefficients and rational exponents.

The preceding construction shows that the Hessian
perturbation vanishes as \(h\to0\). To apply the gap-preservation
principle, we need a quantitative bound that determines how small
\(h\) must be for~\autoref{eq:hessian-perturbation} to hold. The
following lemma provides this bound.

\begin{lemma}[Gap-preserving curvature approximation]
\label{lem:exponential-curvature-approximation}
Let \(p:\mathbb{R}^n\to\mathbb{R}\) be a polynomial,
and let \(q_h=p\circ\Phi_h\). Then \(q_h\) is the exponential form of a
signomial and, for every rational \(0<h\leq1/2\),
\[
    \sup_{y\in[-1,1]^n}
    \left\|
        \nabla^2q_h(y)-\nabla^2p(y)
    \right\|_2
    \leq C_{\mathrm{err}}(p)h,
\]
where \(C_{\mathrm{err}}(p)=L_p+6H_p+2G_p\).
\end{lemma}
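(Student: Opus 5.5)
The plan is to combine the three ingredients already assembled in this subsection: the Hessian error decomposition~\autoref{eq:hessian-error-decomp}, the transfer bound~\autoref{eq:curvature-transfer-bound}, and the elementary estimates on \(E_0,E_1,E_2\) for the canonical map~\autoref{eq:canonical-exponential-map}. The signomial claim is handled separately and first. Each monomial \(\prod_i y_i^{k_i}\) of \(p\) becomes \(\prod_i h^{-k_i}(e^{hy_i}-1)^{k_i}\). Expanding each factor by the binomial theorem gives a finite sum of terms \(c_\ell e^{h\ell^\top y}\) with \(\ell\in\mathbb{Z}_{\ge0}^n\) and rational \(c_\ell\) (since \(h\) is rational). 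Setting \(x=e^y\), these terms are \(c_\ell x^{h\ell}\), so \(q_h\) is the exponential form of a signomial with rational exponents \(h\ell\).

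For the quantitative bound, I will first verify the derivative estimates for \(|t|\le1\) and \(0<h\le1/2\), using \(|ht|\le1/2\) and \(e^{1/2}<2\).
\begin{itemize}
\item By Taylor's theorem with remainder, \(\phi_h(t)-t=\tfrac{h t^2}{2}e^{\theta ht}\) for some \(\theta\in(0,1)\). Hence \(E_0(h)\le \tfrac{h}{2}e^{1/2}\le h\).
\item By the mean value theorem, \(|e^{ht}-1|\le h|t|e^{h|t|}\le 2h\). Hence \(E_1(h)\le2h\).
\item Directly, \(|he^{ht}|\le 2h\). Hence \(E_2(h)\le 2h\).
\end{itemize}
In particular \(E_0(h)\le 1/2\le1\), so \(\Phi_h([-1,1]^n)\subseteq[-2,2]^n\). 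This is exactly the box on which \(G_p,H_p,L_p\) are defined.

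Next I will justify~\autoref{eq:curvature-transfer-bound} term by term, since the text only sketches it.
\begin{itemize}
\item \emph{Argument error.} Both \(y\) and \(\Phi_h(y)\) lie in \([-2,2]^n\), and \(\|\Phi_h(y)-y\|_\infty\le E_0\). The Lipschitz bound therefore gives at most \(L_pE_0\).
\item \emph{Jacobian error.} Write \(M=\nabla^2p(\Phi_h(y))\) and \(J_h=I+\Delta\), with \(\Delta\) diagonal and \(\|\Delta\|_2\le E_1\). Then \(J_hMJ_h-M=\Delta M+M\Delta+\Delta M\Delta\), whose norm is at most \(H_p(2E_1+E_1^2)\).
\item \emph{Second-derivative error.} This term is diagonal with entries bounded by \(E_2\,|\partial_ip(\Phi_h(y))|\le E_2G_p\).
\end{itemize}
The triangle inequality then yields~\autoref{eq:curvature-transfer-bound}.

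Finally, substitute the estimates. Since \(E_1\le 2h\le1\), we have \(E_1^2\le 2h\), so \(2E_1+E_1^2\le 4h+2h=6h\). The total is therefore at most \(L_ph+6H_ph+2G_ph=C_{\mathrm{err}}(p)h\).

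The main point requiring care is bookkeeping rather than depth. The step that most needs checking is the Taylor estimates, where the constants must come out as \(1,2,2\). It also matters that the derivative bounds of \(p\) are applied only at points of \([-2,2]^n\), which is why \(E_0\le1\) has to be established first. A smaller point is that \(E_1^2\le 2h\) uses \(h\le1/2\); with that, the coefficient \(6\) of \(H_p\) follows.
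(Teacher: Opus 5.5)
Your proposal is correct and takes essentially the same approach as the paper: a binomial expansion for the signomial form, the Taylor bounds \(E_0\le h\) and \(E_1,E_2\le 2h\), and the three-term Hessian decomposition, which together give \((L_p+6H_p+2G_p)h\). The only difference is cosmetic: the paper bounds the Jacobian error via \((J_h-I_n)MJ_h+M(J_h-I_n)\) with \(\|J_h\|_2\le 2\), whereas you expand \(\Delta M+M\Delta+\Delta M\Delta\) and use \(E_1^2\le E_1\); both yield the constant \(6\).
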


Thus, the Hessian perturbation condition~\autoref{eq:hessian-perturbation} holds with
\(E=C_{\mathrm{err}}(p)h\). Let
\(\gamma=\frac12\min\{\gamma_+,\gamma_-\}\) and
\(\overline C_{\mathrm{err}}(p)=\max\{1,C_{\mathrm{err}}(p)\}\), and
choose
\(h=\min\{\frac12,\gamma/\overline C_{\mathrm{err}}(p)\}\).
Since \(C_{\mathrm{err}}(p)\leq\overline C_{\mathrm{err}}(p)\), this
choice gives
\(E\leq\overline C_{\mathrm{err}}(p)h\leq\gamma
=\frac12\min\{\gamma_+,\gamma_-\}\).
Hence the two alternatives retain curvature margins of at least
\(\gamma_+/2\) and \(\gamma_-/2\), respectively. In our setting,
\(\overline C_{\mathrm{err}}(p)\) is polynomially bounded, while
\(\gamma_+\) and \(\gamma_-\) have inverse-polynomial magnitude.
Therefore, \(h\) can be chosen rational with inverse-polynomial
magnitude and polynomial encoding length. The resulting \(q_h\) can
be constructed in polynomial time and also has polynomial encoding
length. The details are given in Appendix~\ref{app:exponential-approximation-proof}.

\section{Constrained Hardness for Transformed Convexity}
\label{sec:compact-hardness}

We now establish strong NP-hardness for the constrained versions of the two transformed convexity problems. Both reductions apply the gap-preserving exponential approximation of
Section~\ref{subsec:gap-preserving-exponential-approximation} on a compact domain, with the approximation parameter chosen so that the Hessian perturbation remains smaller than the curvature margins of \CBGC{}. We begin with log-coordinate convexity.

\begin{theorem}
\label{thm:compact-log-coordinate}
The constrained \LCC{} problem is strongly NP-hard.
\end{theorem}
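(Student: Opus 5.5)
We reduce from \CBGC{} using Lemma~\ref{lem:exponential-curvature-approximation}. Let \((p,\gamma_+,\gamma_-)\) be an instance of \CBGC{}: \(p\) is cubic on \([-1,1]^n\), its coefficients have polynomially bounded numerators and denominators, and \(\gamma_\pm\geq 1/\operatorname{poly}(n)\). The output instance will be the signomial \(f\) whose exponential form is \(q_h=p\circ\Phi_h\), with \(\phi_h(t)=(e^{ht}-1)/h\), together with the compact box \(Y=[-1,1]^n\).

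Concretely, set \(x_i=e^{y_i}\). Then \(\phi_h(y_i)=(x_i^h-1)/h\), so
\[
    f(x)=p\bigl((x_1^h-1)/h,\ldots,(x_n^h-1)/h\bigr)
\]
is a signomial with rational exponents in \(h\{0,1,2,3\}\), and by construction \(f(e^y)=q_h(y)\). We choose \(h=\min\{\tfrac12,\gamma/\overline C_{\mathrm{err}}(p)\}\) with \(\gamma=\tfrac12\min\{\gamma_+,\gamma_-\}\). Lemma~\ref{lem:exponential-curvature-approximation} and Weyl's inequality then give \(|\kappa_Y(q_h)-\kappa_Y(p)|\leq\gamma\). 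Hence in the \textsc{Yes} case \(\kappa_Y(q_h)\geq\gamma_+/2>0\), so \(y\mapsto f(e^y)\) is convex on \(Y\). In the \textsc{No} case \(\kappa_Y(q_h)\leq-\gamma_-/2<0\). Because \(Y\) is compact and \(\nabla^2 q_h\) is continuous, this infimum is attained, so some Hessian on \(Y\) has a negative eigenvalue and \(q_h\) is not convex on \(Y\). A decision procedure for constrained \LCC{} therefore solves \CBGC{}.

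The main obstacle is making the reduction strongly polynomial, that is, bounding \(G_p,H_p,L_p\) and the size of the output. Since \(p\) is cubic with \(O(n^3)\) monomials and coefficients of magnitude at most \(\operatorname{poly}(n)\), crude entrywise bounds on \([-2,2]^n\) suffice. For example, \(\|\nabla^2 p\|_2\leq n\max_{ij}|\partial_{ij}p|\), and \(\nabla^2p\) is affine, so \(L_p\) is at most \(n\) times a sum of third-derivative coefficients. These choices make \(G_p,H_p,L_p\) rational and \(\operatorname{poly}(n)\). Consequently \(\overline C_{\mathrm{err}}(p)\) is polynomially bounded, and \(h\) can be taken rational with \(1/h\leq\operatorname{poly}(n)\); we round \(\gamma\) down to a rational with small numerator and denominator if needed.

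Expanding each monomial of degree at most \(3\) in the quantities \((x_i^h-1)/h\) yields at most \(8\) terms per monomial. Each term has exponent a multiple of \(h\) by an integer in \(\{0,\dots,3\}\) and coefficient equal to the original coefficient times \(\pm h^{-k}\) with \(k\leq3\). The output thus has \(O(n^3)\) terms, numerators and denominators bounded by \(\operatorname{poly}(n)\), and is computable in polynomial time. This preserves strong NP-hardness.
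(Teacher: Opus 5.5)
Your proposal is correct and follows essentially the same route as the paper: reduce from \CBGC{} via the exponential deformation of Lemma~\ref{lem:exponential-curvature-approximation}, choose $h$ so that the Hessian perturbation is at most half the curvature margin, and read off log-coordinate convexity or nonconvexity of the resulting signomial on $[-1,1]^n$. Your explicit description of the output signomial $f(x)=p\bigl((x_1^h-1)/h,\ldots,(x_n^h-1)/h\bigr)$ and of its encoding is more detailed than the paper's. One harmless slip: the expanded coefficients also carry binomial factors of at most $3$, not only $\pm h^{-k}$, and this does not affect the polynomial bounds.
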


\begin{proof}
Let \(p\) be an instance of \CBGC{} on \(Y=[-1,1]^n\), with curvature
margins \(\gamma_+\) and \(\gamma_-\).
By Lemma~\ref{lem:exponential-curvature-approximation}, we can choose a positive
rational \(h\), with polynomial encoding length, and construct a
signomial \(q_h(y)=f_h(e^y)\) such that
    $\sup_{y\in Y}
    \|\nabla^2q_h(y)-\nabla^2p(y)\|_2
    \leq E$,
    $E\leq\frac12\min\{\gamma_+,\gamma_-\}$.
By the gap-preserving argument of
Section~\ref{subsec:gap-convexity}, \(q_h\) is convex on \(Y\) in the positive case and nonconvex on \(Y\) in the negative case. Since
\(q_h(y)=f_h(e^y)\), the signomial \(f_h\) is log-coordinate convex on
\(Y\) exactly in the positive case.

The construction of \(f_h\) is computable in polynomial time and preserves
the required coefficient and exponent encoding bounds. Thus, an algorithm for the constrained version of
\textsc{Log-Coordinate-Convexity} would solve \CBGC{}. Since \CBGC{} is
strongly NP-hard and the reduction preserves the required encoding
bounds, the result follows.
\end{proof}

We next consider log-log convexity. 
For any positive twice
continuously differentiable function \(g\),
$\nabla^2\log g(y)=
({g(y)\nabla^2g(y)-\nabla g(y)\nabla g(y)^\top})/{g(y)^2}$.
Since \(g(y)^2>0\), the function \(\log g\) is convex exactly when the numerator is positive semidefinite throughout the domain. We therefore define the log-curvature matrix by
\begin{equation}
\label{eq:log-curvature-matrix}
M_g(y)
=
g(y)\nabla^2g(y)
-
\nabla g(y)\nabla g(y)^\top.
\end{equation}
Our reduction adds
a positive constant to the exponential approximation and scales its
remaining terms so that \(g\) stays positive and, in the positive
alternative, \(g\nabla^2g\) dominates 
\(\nabla g\nabla g^\top\).

\begin{theorem}
\label{thm:compact-log-log}
The constrained \LLC{} problem is strongly NP-hard.
\end{theorem}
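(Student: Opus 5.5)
We reduce from \CBGC{} on \(Y=[-1,1]^n\), reusing the exponential approximation of Lemma~\ref{lem:exponential-curvature-approximation}. Given an instance \(p\) with margins \(\gamma_\pm\), we first normalize so that \(p(0)=0\) and \(\nabla p(0)=0\). Subtracting the affine part does not change the Hessian, so this is harmless. We then choose \(h\) as in Section~\ref{subsec:gap-preserving-exponential-approximation}, so that \(q_h=p\circ\Phi_h\) satisfies \(\sup_Y\|\nabla^2q_h-\nabla^2p\|_2\le\frac12\min\{\gamma_+,\gamma_-\}\). The candidate exponential form is
\[
  g(y)=1+\varepsilon\,q_h(y),
\]
with a rational scaling \(\varepsilon>0\) to be fixed. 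Because the constant \(1\) is the exponential form of the monomial with exponent \(0\), \(g\) is the exponential form of a signomial \(f=1+\varepsilon f_h\).

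Next we establish uniform bounds on \(Y\). Writing \(B_0\ge\sup_Y|q_h|\) and \(B_1\ge\sup_Y\|\nabla q_h\|_2\), both are polynomially bounded in terms of \(G_p\) and \(H_p\), via the derivative bounds \(E_0,E_1\) and \(\Phi_h(Y)\subseteq[-2,2]^n\). If \(\varepsilon B_0\le\frac12\), then \(g\in[\frac12,\frac32]\) on \(Y\), which secures the positivity promise. The log-curvature matrix~\eqref{eq:log-curvature-matrix} is
\[
  M_g=\varepsilon(1+\varepsilon q_h)\nabla^2q_h-\varepsilon^2\nabla q_h\nabla q_h^\top,
\]
so
\[
  \varepsilon^{-1}M_g=\nabla^2q_h+\varepsilon\bigl(q_h\nabla^2q_h-\nabla q_h\nabla q_h^\top\bigr).
\]
The bracketed term has spectral norm at most \(B_0H'+B_1^2\), where \(H'\ge\sup_Y\|\nabla^2q_h\|_2\) is bounded by \(H_p+\gamma\). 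Hence
\[
  \bigl\|\varepsilon^{-1}M_g-\nabla^2p\bigr\|_2\le\tfrac12\min\{\gamma_+,\gamma_-\}+\varepsilon\bigl(B_0H'+B_1^2\bigr).
\]
We choose \(\varepsilon\) rational with \(\varepsilon(B_0H'+B_1^2)\le\frac14\min\{\gamma_+,\gamma_-\}\) and \(\varepsilon B_0\le\frac12\). By Weyl's inequality, \(\lambda_{\min}(M_g)\ge\varepsilon\gamma_+/4>0\) throughout \(Y\) in the \textsc{Yes} case. In the \textsc{No} case, the infimum over compact \(Y\) is attained at some \(y\) with \(\lambda_{\min}(M_g(y))\le-\varepsilon\gamma_-/4<0\). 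Since \(g^2>0\), \(\log g\) is convex on \(Y\) exactly in the \textsc{Yes} case.

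The main obstacle is the encoding bookkeeping rather than the curvature argument. We must show that \(B_0\), \(B_1\) and \(H'\) admit polynomially bounded rational upper bounds computable from the coefficients of the cubic \(p\). This follows from explicit crude estimates: each derivative entry is a sum of \(O(n^3)\) monomial contributions on \([-2,2]^n\). It then follows that \(\varepsilon\) and \(h\) have inverse-polynomial magnitude and polynomial bit length, and that expanding \(1+\varepsilon\,p(\Phi_h(y))\) produces polynomially many terms \(c_\ell e^{h\ell^\top y}\) whose coefficients have polynomial encoding length. This is the same expansion argument used for Lemma~\ref{lem:exponential-curvature-approximation} and Theorem~\ref{thm:compact-log-coordinate}; the extra factor \(\varepsilon\) and the added constant change only the coefficients polynomially. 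Given these bounds, an algorithm for constrained \LLC{} decides \CBGC{}, and strong NP-hardness transfers.
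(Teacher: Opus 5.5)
Your proposal is correct and follows essentially the same route as the paper: the offset-and-scale construction $g=1+\varepsilon q_h$, with $\varepsilon$ small relative to $\sup_Y|q_h|$ and $\sup_Y\|\nabla q_h\|_2$, followed by a Weyl-type comparison of $M_g$ with $\varepsilon\nabla^2 q_h$. The differences are cosmetic. The paper argues one-sidedly, using $g\ge\frac12$ in the \textsc{Yes} case and the negative semidefiniteness of $-\varepsilon^2\nabla q_h\nabla q_h^\top$ in the \textsc{No} case, so it needs no Hessian bound $H'$; your normalization $p(0)=0$, $\nabla p(0)=0$ is harmless but unnecessary.
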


\begin{proof}
Let \(p\) be an instance of \CBGC{} on \(Y=[-1,1]^n\). As in the proof
of Theorem~\ref{thm:compact-log-coordinate}, choose a positive rational
\(h\) and construct \(q_h(y)=f_h(e^y)\). The gap-preserving
approximation retains inverse-polynomial margins
\(\gamma_+,\gamma_->0\), so
$\kappa_Y(q_h)\geq\gamma_+$ in the positive case,
$\kappa_Y(q_h)\leq-\gamma_-$ in the negative case.

Choose rational bounds \(B_0,B_1\geq1\) such that
\(|q_h(y)|\leq B_0\) and
\(\|\nabla q_h(y)\|_2\leq B_1\) for every \(y\in Y\), and take a
positive rational
\(\varepsilon\leq
\min\{1/(2B_0),\gamma_+/(4B_1^2)\}\).
Define \(g_h(y)=1+\varepsilon q_h(y)\). Then \(g_h\geq1/2\) on
\(Y\), and
$M_{g_h}=\varepsilon g_h\nabla^2q_h
-\varepsilon^2\nabla q_h\nabla q_h^\top$.

In the positive case, Weyl's inequality gives, for every \(y\in Y\),
\[
\begin{aligned}
    \lambda_{\min}\!\left(M_{g_h}(y)\right)
    \geq
    \varepsilon g_h(y)
    \lambda_{\min}\!\left(\nabla^2q_h(y)\right)
    -
    \varepsilon^2\|\nabla q_h(y)\|_2^2 \geq
    \varepsilon
    \left(
        \frac{\gamma_+}{2}-\varepsilon B_1^2
    \right)
    \geq
    \frac{\varepsilon\gamma_+}{4}.
\end{aligned}
\]
Hence \(M_{g_h}(y)\succeq0\) throughout \(Y\), and therefore
\(\log g_h\) is convex on \(Y\).

In the negative case, there exists \(\hat y\in Y\) such that
\(\lambda_{\min}(\nabla^2q_h(\hat y))\leq-\gamma_-\). Since the
rank-one term in \(M_{g_h}\) is negative semidefinite,
\[
    \lambda_{\min}\!\left(M_{g_h}(\hat y)\right)
    \leq
    \varepsilon g_h(\hat y)
    \lambda_{\min}\!\left(\nabla^2q_h(\hat y)\right)
    \leq
    -\varepsilon g_h(\hat y)\gamma_-
    <0.
\]
Thus, \(\log g_h\) is not convex on \(Y\).

The exponential-approximation bounds allow \(B_0\), \(B_1\), and
\(\varepsilon\) to be chosen with polynomial encoding length. Since
\(q_h\) is the exponential form of a signomial, so is
\(g_h=1+\varepsilon q_h\). The corresponding signomial can therefore
be constructed in polynomial time while preserving the required
encoding bounds. This gives a reduction from \CBGC{}, and strong
NP-hardness follows.
\end{proof}

\section{Unconstrained hardness for transformed convexity}
\label{sec:unconstrained-hardness}

We now establish strong NP-hardness for the unconstrained versions of the two transformed convexity problems. Compact-domain hardness need not extend to \(\mathbb{R}^n\): an interior negative-curvature witness remains valid globally, but convexity on the unit box gives no control outside it. To bridge this gap, both reductions augment the same hard exponential form \(q\) with an auxiliary construction that extends the curvature separation beyond the unit box while preserving the interior witness.

Let \(p\) be an instance of \GQGC{}, choose a rational \(h\) according to the gap-preservation condition of Section~\ref{subsec:gap-preserving-exponential-approximation}, and write \(q=q_h=p\circ\Phi_h\). Since \(p\) has degree four and \(h\le\frac12\),
$D=\max_{\alpha\in\operatorname{supp}(q)}\|\alpha\|_1\le2$.
Let \(\gamma=\frac12\min\{\gamma_+,\gamma_-\}\). By Lemma~\ref{lem:exponential-curvature-approximation} and the interior-witness property proved in Appendix~\ref{app:quartic-gap-proof}, it is strongly NP-hard to distinguish
$\kappa_{[-1,1]^n}(q)\ge\gamma$
from
$\kappa_{[-\frac12,\frac12]^n}(q)\le-\gamma$.

To extend this curvature separation to all of \(\mathbb{R}^n\), we
augment \(q\) with an auxiliary exponential form:
\begin{equation}
\label{eq:globalized-exponential-form}
Q_L(y)=q(y)+\delta_L\Psi_L(y).
\end{equation}
Here, \(\Psi_L\) is designed to provide sufficiently large positive
curvature outside the unit box to dominate any negative curvature of
\(q\). The coefficient \(\delta_L>0\) will be chosen small enough that
\(Q_L\) retains the interior negative curvature of \(q\), while \(L\)
controls how rapidly the curvature supplied by \(\Psi_L\) grows outside
the unit box.

A natural first choice for \(\Psi_L\) is
\(\sum_i(e^{Ly_i}+e^{-Ly_i})\), since, whenever \(|y_i|\) is large,
one of \(e^{Ly_i}\) and \(e^{-Ly_i}\) grows as \(e^{L|y_i|}\).
However, its Hessian is diagonal, so the rapidly growing curvature
generated by \(y_i\) is confined to the \(e_i\)-direction and may not
dominate negative curvature of \(q\) in other directions. To distribute
this growth across every coordinate direction, we couple
\(e^{Ly_i}+e^{-Ly_i}\) with \(e^{y_j}+e^{-y_j}\) for every \(i,j\)
and define
\begin{equation}
\label{eq:global-auxiliary}
\Psi_L(y)
=
\sum_{i,j=1}^n
\bigl(e^{Ly_i}+e^{-Ly_i}\bigr)
\bigl(e^{y_j}+e^{-y_j}\bigr) 
=
\sum_{i,j=1}^n
\sum_{s,t\in{-1,1}}
e^{sLy_i+t y_j},
\qquad L>1.
\end{equation}
In the factorized representation, \(e^{Ly_i}+e^{-Ly_i}\) provides
rapid growth when \(|y_i|\) is large, while
\(e^{y_j}+e^{-y_j}\) transfers this growth into curvature in the
\(e_j\)-direction. Summing over all \(i,j\) ensures that, regardless
of which coordinate of \(y\) is large, \(\Psi_L\) provides
growing curvature in every direction.

The properties used in both reductions are summarized below.

\begin{lemma}[Globalizing auxiliary]
\label{lem:auxiliary-exponential}
The function \(\Psi_L\) is positive, globally convex, and globally
log-convex. Writing \(R=\|y\|_\infty\), and letting
\(M_{\Psi_L}\) denote the log-curvature matrix of \(\Psi_L\), the
smallest eigenvalues of its Hessian and log-curvature matrix satisfy
$\lambda_{\min}(\nabla^2\Psi_L(y))
\geq2e^{(L-1)R}$,
$\lambda_{\min}(M_{\Psi_L}(y))\geq4e^{2LR}$.
Moreover, there
exist polynomials \(P_1(n,L)\) and \(P_2(n,L)\) such that
\(\Psi_L(y)\), \(\|\nabla\Psi_L(y)\|_2\), and
\(\|\nabla^2\Psi_L(y)\|_2\) are bounded above by
\(P_1(n,L)e^{(L+1)R}\), while
$\|M_{\Psi_L}(y)\|_2 \leq P_2(n,L)e^{2(L+1)R}$.
\end{lemma}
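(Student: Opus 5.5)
The plan is to work throughout with the exponential-sum representation
\[
\Psi_L(y)=\sum_{k}e^{\ell_k^\top y},\qquad \ell_k=sLe_i+te_j,\quad k=(i,j,s,t)\in[n]^2\times\{-1,1\}^2,
\]
which has $4n^2$ terms, all with coefficient $1$. Positivity is immediate from this representation. For log-convexity, note that $\Psi_L=A\cdot B$ with $A(y)=\sum_i(e^{Ly_i}+e^{-Ly_i})$ and $B(y)=\sum_j(e^{y_j}+e^{-y_j})$. Each factor is a positive sum of exponentials of linear forms, so its logarithm is a log-sum-exp of affine functions and hence convex. It follows that $\log\Psi_L=\log A+\log B$ is convex. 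Convexity of $\Psi_L$ itself then follows because $\Psi_L=\exp(\log\Psi_L)$ is an increasing convex function composed with a convex one. Alternatively, it follows from the explicit Hessian formula below. Both global properties are therefore easy, and the work lies in the two quantitative lower bounds.

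For the Hessian, I will use $\nabla^2\Psi_L(y)=\sum_k e^{\ell_k^\top y}\ell_k\ell_k^\top$. Fix a unit vector $d$ and pick an index $i$ with $|y_i|=R$, and set $s=\operatorname{sign}(y_i)$ (taking $s=1$ if $R=0$), so that $e^{sLy_i}=e^{LR}$. Discard all terms except those with this fixed $(i,s)$; every term is nonnegative, so this only lowers the value. For each $j$, set $a=sLd_i$ and $b=d_j$. The two terms $t=\pm1$ contribute
\[
e^{LR}\bigl(e^{y_j}(a+b)^2+e^{-y_j}(a-b)^2\bigr)\ \ge\ e^{LR}e^{-R}\bigl((a+b)^2+(a-b)^2\bigr)\ \ge\ 2e^{(L-1)R}d_j^2,
\]
where the first inequality uses $e^{\pm y_j}\ge e^{-|y_j|}\ge e^{-R}$. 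Summing over $j$ gives $d^\top\nabla^2\Psi_L d\ge 2e^{(L-1)R}$.

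For the log-curvature matrix, I will use the standard identity for exponential sums,
\[
M_{\Psi_L}(y)=\tfrac12\sum_{k,k'}e^{(\ell_k+\ell_{k'})^\top y}(\ell_k-\ell_{k'})(\ell_k-\ell_{k'})^\top .
\]
This follows by expanding $\Psi\nabla^2\Psi-\nabla\Psi\nabla\Psi^\top$ and symmetrizing, and it exhibits $M_{\Psi_L}$ as a sum of positive semidefinite terms, which is a second proof of log-convexity. I will keep only the ordered pairs $k=(i,j,s,t)$ and $k'=(i,j,s,-t)$, with the same maximizing $(i,s)$ as before. For such a pair, $\ell_k+\ell_{k'}=2sLe_i$ and $\ell_k-\ell_{k'}=2te_j$. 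Each such pair therefore contributes $e^{2LR}\cdot4e_je_j^\top$. There are two orderings ($t=\pm1$) for each $j$, and these cancel the factor $\tfrac12$. The result is $M_{\Psi_L}\succeq4e^{2LR}I$, which is the stated bound.

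The upper bounds are routine counting. Every exponent satisfies $\ell_k^\top y\le(L+1)R$, and $\|\ell_k\|_2\le L+1$. Hence $\Psi_L\le4n^2e^{(L+1)R}$, $\|\nabla\Psi_L\|_2\le4n^2(L+1)e^{(L+1)R}$, and $\|\nabla^2\Psi_L\|_2\le4n^2(L+1)^2e^{(L+1)R}$. It therefore suffices to take $P_1=4n^2(L+1)^2$. Then
\[
\|M_{\Psi_L}\|_2\le\Psi_L\|\nabla^2\Psi_L\|_2+\|\nabla\Psi_L\|_2^2,
\]
so $P_2=32n^4(L+1)^2$ works. The only delicate points are the two lower bounds. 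In each, the key step is to choose the coordinate $i$ attaining $R$ and to pair the $t=\pm1$ terms so that the unfavourable factor $e^{\pm y_j}$ is controlled by $e^{-R}$, or cancels outright in the log-curvature case. I expect the Hessian bound to be the main obstacle, since it is where the possibly large $L d_i$ cross-terms must be neutralized by the $(a+b)^2+(a-b)^2$ identity.
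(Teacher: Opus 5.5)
Your proof is correct and follows essentially the same route as the paper. You choose the same coordinate $i$ and sign $s$ attaining $R$, pair the $t=\pm1$ terms in the same way for the Hessian bound, select the same pairs in the exponential-sum identity for $M_{\Psi_L}$, and obtain the same constants $P_1=4n^2(L+1)^2$ and $P_2=32n^4(L+1)^2$; the only cosmetic difference is that you derive log-convexity from the factorization $\Psi_L=A\cdot B$ and convexity from log-convexity, whereas the paper reads both directly off the representation of $\Psi_L$ as a sum of exponentials of linear forms.
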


The lower bounds allow the auxiliary term to dominate the curvature of
\(q\) outside the unit box, while the upper bounds allow its coefficient
to be chosen small enough to preserve negative curvature inside the
inner box. The proof and explicit expressions for \(P_1\) and \(P_2\)
are given in Appendix~\ref{app:auxiliary-exponential-proof}.

We first apply the globalization construction to log-coordinate convexity.
We give the main argument below and defer the complete quantitative proof to
Appendix~\ref{app:global-log-coordinate-proof}.

\begin{theorem}
\label{thm:global-log-coordinate}
The unconstrained \LCC{} problem is strongly NP-hard.
\end{theorem}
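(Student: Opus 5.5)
The reduction is from \GQGC{}. Given an instance $p$, I form $q=q_h=p\circ\Phi_h$ as in the setup preceding the theorem. This uses the fact, stated there, that it is strongly NP-hard to distinguish
\[
\kappa_{[-1,1]^n}(q)\ge\gamma
\quad\text{from}\quad
\kappa_{[-\frac12,\frac12]^n}(q)\le-\gamma .
\]
The output instance is the signomial whose exponential form is $Q_L=q+\delta_L\Psi_L$, for suitable rational $L>1$ and $\delta_L>0$. It remains a signomial because every term of $\Psi_L$ is of the form $e^{\ell^\top y}$ with rational $\ell$. I then verify the two alternatives separately.

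\emph{Negative case.} Take $\hat y\in[-\frac12,\frac12]^n$ with $\lambda_{\min}(\nabla^2 q(\hat y))\le-\gamma$. At $R\le\frac12$, Lemma~\ref{lem:auxiliary-exponential} gives $\|\nabla^2\Psi_L(\hat y)\|_2\le P_1(n,L)e^{(L+1)/2}$. Choosing
\[
\delta_L\le \frac{\gamma}{2P_1(n,L)e^{(L+1)/2}}
\]
yields $\lambda_{\min}(\nabla^2Q_L(\hat y))\le-\gamma/2$ by Weyl's inequality, so $Q_L$ is not convex on $\mathbb{R}^n$.

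\emph{Positive case.} On $[-1,1]^n$ we have $\nabla^2 Q_L\succeq\nabla^2 q\succeq\gamma I$, since $\Psi_L$ is convex. Outside the box, with $R=\|y\|_\infty>1$, I need an upper bound on the negative curvature of $q$. Each exponent $\alpha$ of $q$ satisfies $\|\alpha\|_1\le D\le2$, and the coefficients $c_\alpha$ have polynomially bounded total magnitude $S$ (obtained by expanding $p\circ\Phi_h$, with factors $h^{-k}$ for $k\le4$). Hence
\[
\|\nabla^2 q(y)\|_2\le S\,D^2 e^{DR}\le 4S e^{2R}.
\]
The lemma gives the competing lower bound $\delta_L\cdot 2e^{(L-1)R}$. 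So it suffices to have
\[
2\delta_L e^{(L-1)R}\ge 4S e^{2R}\quad\text{for all } R\ge1,
\]
which holds once $L\ge3$ and $\delta_L e^{L-3}\ge2S$.

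\emph{Reconciling the two conditions on $\delta_L$.} They read
\[
\frac{2S}{e^{L-3}}\le\delta_L\le\frac{\gamma}{2P_1 e^{(L+1)/2}} .
\]
Because the outer growth $e^{(L-1)R}$ at $R=1$ beats the inner cost $e^{(L+1)/2}$ by a factor $e^{L/2-3/2}$, a feasible $\delta_L$ exists for $L=O(\log(nS/\gamma))$. I would take $L$ to be a polynomially bounded integer, e.g. $L=\lceil c\log(\cdot)\rceil$, and $\delta_L=2^{-k}$ for an integer $k$ of polynomial size. This matters for strongness: $e^{L}$ is not rational, so I would replace it by rational bounds such as $2\le e\le3$. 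The resulting signomial has polynomially many terms ($4n^2$ from $\Psi_L$ plus those of $q$), exponents of polynomial size, and coefficients of polynomial encoding length.

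\emph{Main obstacle.} The delicate step is the outer-region bound. I need the strict exponent gap $L-1>D$ to hold uniformly, and the constants must make the interval for $\delta_L$ nonempty. Here the $R=1$ boundary is the binding point, because the inner witness sits only at $R\le\frac12$; this separation is exactly why the interior-witness property is needed. A second concern is the remaining strongness bookkeeping. Since $\delta_L$ is only inverse-exponential in $L$ and $L$ is logarithmic, the magnitudes stay polynomial. I would check this carefully alongside the bound on $S$.
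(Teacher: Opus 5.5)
Your proposal is correct and follows essentially the same route as the paper. Both use the globalized form $Q_L=q+\delta_L\Psi_L$, cap $\delta_L$ via $\|\nabla^2\Psi_L\|_2\le P_1(n,L)e^{(L+1)/2}$ on the witness box, and dominate $\|\nabla^2 q\|_2\le C e^{DR}$ by $2\delta_L e^{(L-1)R}$ at $R=1$ with $L=O(\log n)$. The paper fixes $\delta_L=\gamma/(2U_L)$ and takes the smallest admissible integer $L$, whereas you pick $\delta_L=2^{-k}$ inside your feasible interval; for strongness $k$ must be $O(\log n)$, not merely of polynomial size, and your interval allows this.
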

\begin{proof}[Proof sketch]
Consider \(Q_L=q+\delta_L\Psi_L\). In the negative alternative,
$\kappa_{[-\frac12,\frac12]^n}(q)\leq-\gamma$.
On \([-\frac12,\frac12]^n\), Lemma~\ref{lem:auxiliary-exponential} gives
$\|\nabla^2\Psi_L\|_2
\leq P_1(n,L)e^{(L+1)/2}$.
We therefore choose \(\delta_L>0\) such that
$\delta_LP_1(n,L)e^{(L+1)/2}\leq\frac{\gamma}{2}$.
By the curvature perturbation bound,
$\kappa_{[-\frac12,\frac12]^n}(Q_L)
\leq-\frac{\gamma}{2}$,
so \(Q_L\) remains nonconvex.

In the positive alternative,
$\kappa_{[-1,1]^n}(q)\geq\gamma$.
Since \(\Psi_L\) is convex, \(Q_L\) is convex on the unit box. For
\(R=\|y\|_\infty\geq1\), the exponent bound \(D\leq2\) gives
$\|\nabla^2q(y)\|_2\leq C_2e^{DR}$
for a polynomially bounded constant \(C_2\). Lemma~\ref{lem:auxiliary-exponential} and Weyl's inequality then give
\[
\lambda_{\min}\!\left(\nabla^2Q_L(y)\right)
\geq
2\delta_Le^{(L-1)R}-C_2e^{DR}.
\]
The ratio of the auxiliary curvature term to the adverse curvature term is
\(2\delta_L e^{(L-1)R}/(C_2e^{DR})
=(2\delta_L/C_2)e^{(L-1-D)R}\).
When \(L-1>D\), this ratio increases with \(R\). Hence it suffices to make the preceding lower bound nonnegative at \(R=1\).
The quantitative bounds show that an integer \(L=O(\log n)\) and a positive rational \(\delta_L\) of inverse-polynomial magnitude can simultaneously preserve the negative alternative and ensure
$\lambda_{\min}\!\left(\nabla^2Q_L(y)\right)\geq0$
whenever $\|y\|_\infty\geq1$.

Thus, \(Q_L\) is convex on \(\mathbb{R}^n\) in the positive alternative and remains nonconvex in the negative alternative. Since \(Q_L\) is the exponential form of a signomial and \(L\) and \(\delta_L\) have polynomial encoding length, this gives a polynomial-time reduction from \GQGC{}. Strong NP-hardness follows.
\end{proof}

We next adapt the globalization construction to log-log convexity,
where curvature is governed by the log-curvature matrix $M$ rather than the Hessian.
We give the main argument below and defer the complete quantitative proof to Appendix~\ref{app:global-loglog-proof}.

\begin{theorem}
\label{thm:global-log-log-convexity-hardness}
The unconstrained \LLC{} problem is strongly NP-hard.
\end{theorem}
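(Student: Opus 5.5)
We combine the two ingredients already used: the positivity shift $1+\varepsilon(\cdot)$ from Theorem~\ref{thm:compact-log-log} and the globalizing auxiliary $\Psi_L$ from Theorem~\ref{thm:global-log-coordinate}. Let $q=q_h$ be the hard exponential form from \GQGC{}, so it is hard to distinguish $\kappa_{[-1,1]^n}(q)\ge\gamma$ from $\kappa_{[-\frac12,\frac12]^n}(q)\le-\gamma$. Define
\[
G(y)=1+\varepsilon q(y)+\delta\Psi_L(y),
\]
which is again the exponential form of a signomial. We decide log-convexity through $M_G=G\nabla^2G-\nabla G\nabla G^\top$. Positivity of $G$ on all of $\mathbb{R}^n$ is not automatic, because $q$ is unbounded below. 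We will obtain it from the fact that $\Psi_L\ge 4n^2\cosh(LR)$ dominates $|q|\le C_0e^{DR}$ once $L>D$, together with $\varepsilon\le 1/(2\sup_{[-1,1]^n}|q|)$ on the box. For $R\ge1$ we require $\delta e^{LR}\gtrsim \varepsilon C_0e^{2R}$; this is the same kind of condition as the one below.

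\textbf{Negative alternative.} Take $\hat y\in[-\frac12,\frac12]^n$ with $\lambda_{\min}(\nabla^2q(\hat y))\le-\gamma$. The rank-one term of $M_G$ is negative semidefinite, so
\[
\lambda_{\min}(M_G(\hat y))\le G(\hat y)\bigl(-\varepsilon\gamma+\delta\|\nabla^2\Psi_L(\hat y)\|_2\bigr).
\]
By Lemma~\ref{lem:auxiliary-exponential}, choosing $\delta P_1(n,L)e^{(L+1)/2}\le\varepsilon\gamma/2$ makes this negative. The same choice keeps $G(\hat y)>0$.

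\textbf{Positive alternative.} Expand
\[
M_G=G\bigl(\varepsilon\nabla^2q+\delta\nabla^2\Psi_L\bigr)-\bigl(\varepsilon\nabla q+\delta\nabla\Psi_L\bigr)\bigl(\varepsilon\nabla q+\delta\nabla\Psi_L\bigr)^\top .
\]
On $[-1,1]^n$, proceed as in Theorem~\ref{thm:compact-log-log}. First $\delta\Psi_L$ is small there, so $G\ge\frac12$. The terms $\varepsilon G\nabla^2 q\succeq \frac{\varepsilon\gamma}{2}I$ and $\delta G\nabla^2\Psi_L\succeq0$ give curvature. The subtracted outer product has norm at most $2\varepsilon^2B_1^2+2\delta^2P_1^2e^{2(L+1)}$, which is absorbed by taking $\varepsilon\le\gamma/(16B_1^2)$ and $\delta$ smaller still.

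Outside the box, for $R\ge1$, compare with $\delta^2M_{\Psi_L}$. Write $G=\delta\Psi_L+(1+\varepsilon q)$. Then
\[
M_G=\delta^2M_{\Psi_L}+\delta(1+\varepsilon q)\nabla^2\Psi_L+\varepsilon G\nabla^2q-\varepsilon\delta\bigl(\nabla q\nabla\Psi_L^\top+\nabla\Psi_L\nabla q^\top\bigr)-\varepsilon^2\nabla q\nabla q^\top .
\]
The leading term satisfies $\delta^2\lambda_{\min}(M_{\Psi_L})\ge4\delta^2e^{2LR}$. Every cross term is bounded by a polynomial in $n,L$ times one of
\[
\delta\varepsilon\, e^{(L+1)R}e^{DR},\qquad \varepsilon^2e^{2DR},\qquad \delta e^{(L+1)R}\bigl(1+\varepsilon e^{DR}\bigr),
\]
using $D\le2$ and the bounds of Lemma~\ref{lem:auxiliary-exponential}.

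\textbf{Main obstacle.} The terms of the form $\delta e^{(L+1)R}$ grow faster than $e^{LR}$ but not faster than $e^{2LR}$. Hence choosing $L>2D+1$ (say $L=O(\log n)$) makes the ratio of each error to $4\delta^2e^{2LR}$ decreasing in $R$, so it suffices to check $R=1$. The catch is that at $R=1$ the dominant term carries $\delta^2$, while the errors carry $\delta$ or $\varepsilon$, and the negative alternative forces $\delta\ll\varepsilon$.

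We handle this through the order of the choices. First fix $\varepsilon$ from the box conditions. Then replace the threshold $R=1$ by a radius $R_0=O(\log(n/(\delta\varepsilon\gamma)))$, chosen so that $e^{(L-2D-1)R_0}$ compensates the factor $1/\delta$. On the annulus $1\le R\le R_0$, rerun the compact argument of Theorem~\ref{thm:compact-log-log} on the box $[-R_0,R_0]^n$: the gap from \GQGC{} holds globally, and $\Phi_h$ is controlled there because $hR_0=O(1)$. If the bookkeeping fails, the fallback is to let $\Psi_L$ multiply the whole form, $G=(1+\varepsilon q)+\delta\Psi_L$ with a larger $L$. Concretely, choose $L$ so large that $e^{(L-2D-1)}$ exceeds $1/\delta$ by a polynomial factor. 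This needs only $L=O(\log(n/\gamma))$, which keeps every exponent and coefficient of polynomial encoding length. Strong NP-hardness then follows by reduction from \GQGC{}.
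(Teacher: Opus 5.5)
\textbf{There is a genuine gap, in the positive alternative.} Your construction $G=1+\varepsilon q+\delta\Psi_L$ is the paper's $G_L$. Your expansion of $M_G$ is equivalent to $M_{g_0}+\delta\,\mathcal{C}(g_0,\Psi_L)+\delta^2M_{\Psi_L}$, and your negative-alternative step is correct. The problem is that you never reach a consistent choice of $\delta$ and $L$ for the positive alternative.

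\emph{Your box step forces $\delta$ too small.} On $[-1,1]^n$ you absorb $\delta^2\nabla\Psi_L\nabla\Psi_L^\top$ into $\varepsilon\gamma/2$. This forces $\delta\lesssim\sqrt{\varepsilon\gamma}\,e^{-(L+1)}/P_1(n,L)$. With that $\delta$, the leading exterior term at $R=1$ is $4\delta^2e^{2L}\lesssim\varepsilon\gamma/P_1^2$. This is smaller than the error $\delta(1+\varepsilon q)\nabla^2\Psi_L$ that you list, which has size $\delta P_1e^{L+1}$. It is also smaller than the bound $\varepsilon C_2e^{D}\ge\varepsilon\gamma$ for the piece $\varepsilon\nabla^2q$ coming from the constant $1$ in $G$, which your list omits. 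Increasing $L$ does not help: with $\delta\propto e^{-L}$, neither side gains anything exponential in $L$. So the obstruction is created by your box step, not by the negative alternative.

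\emph{Your two repairs do not remove it.}
\begin{itemize}
\item The annulus argument needs $\nabla^2q$ to keep its margin on $[-R_0,R_0]^n$. Lemma~\ref{lem:exponential-curvature-approximation} does not give this: there $\operatorname{Diag}(\phi_h''(y_i)\,\partial_ip(\Phi_h(y)))$ is of order $hS_pR_0^{3}$, so $hR_0=O(1)$ is not enough.
\item The annulus argument also requires re-choosing $\varepsilon$ from bounds on $[-R_0,R_0]^n$, which feeds back into $\delta$ and $R_0$; you do not carry this out.
\item Decisively, adding $\delta\Psi_L$ on the annulus by the same absorption forces $\delta\lesssim e^{-(L+1)R_0}$, which reproduces the same failure at $R=R_0$.
\item The fallback ``choose $L$ with $e^{L-2D-1}\gg1/\delta$'' treats $\delta$ as independent of $L$. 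In fact $\delta$ must decay with $L$.
\end{itemize}

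\textbf{The missing idea.} The unit box imposes no smallness on $\delta$ at all. In the positive alternative, $g_0=1+\varepsilon q$ is positive and log-convex on $[-1,1]^n$, by the argument of Theorem~\ref{thm:compact-log-log}. $\Psi_L$ is globally log-convex, and a positive sum of log-convex functions is log-convex. Hence $g_0+\delta\Psi_L$ is log-convex on the box for every $\delta>0$.

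Consequently $\delta$ is constrained only by the witness in $[-\frac12,\frac12]^n$. There $\|\mathcal{C}(g_0,\Psi_L)\|_2$ and $\|M_{\Psi_L}\|_2$ are at most $\mathrm{poly}(n,L)\,e^{(L+3)/2}$ and $\mathrm{poly}(n,L)\,e^{L+1}$, respectively. The paper therefore takes $\delta_L=\nu/(8T_L)\ge\nu(3/5)^L/(8P_L)$, with $\nu=\min\{1,\varepsilon\gamma/2\}$.

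At $R=1$ this gives $\delta_Le^{L}\ge\nu(3e/5)^L/(8P_L)$. This grows exponentially in $L$ because $3e/5>8/5$. So some $L=O(\log n)$ does both of the following:
\begin{itemize}
\item it makes $4\delta_L^2e^{2L}$ at least twice $\delta_LK_Le^{D+L+1}$ and at least twice $K_Me^{2D}$;
\item it makes $\delta_Le^{L}>K_0e^{D}$, which gives global positivity.
\end{itemize}
Since $L>D+1$, the relevant ratios increase in $R$, so this covers all $\|y\|_\infty\ge1$. The factor of roughly $e^{L/2}$ gained between the witness half-box and the boundary of the unit box is exactly what defeats the $\delta$-versus-$\delta^2$ obstacle you identified. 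Your sketch never isolates it.
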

\begin{proof}[Proof sketch]
Starting from the same exponential polynomial \(q\), choose
\(\varepsilon>0\) as in the compact log-log reduction and define
$g_0=1+\varepsilon q$.
The choice of \(\varepsilon\) ensures that \(g_0\ge\frac12\) on the unit box in both alternatives. In the positive alternative, \(g_0\) is log-convex there; in the negative alternative, it retains a strict
\(M_{g_0}\)-curvature witness in \([-\frac12,\frac12]^n\). We augment
\(g_0\) by setting
$G_L=g_0+\delta_L\Psi_L$.

Unlike the Hessian, \(M_g=g\nabla^2g-\nabla g\nabla g^\top\) is
nonlinear under addition. For twice continuously differentiable
functions \(u\) and \(v\), define
$\mathcal{C}(u,v)
=
u\nabla^2v+v\nabla^2u
-\nabla u\nabla v^\top
-\nabla v\nabla u^\top$.
A direct expansion gives
$M_{G_L}
=
M_{g_0}
+\delta_L\mathcal{C}(g_0,\Psi_L)
+\delta_L^2M_{\Psi_L}$.
For a fixed \(L\), we first choose \(\delta_L>0\) sufficiently small
that the last two terms preserve the negative eigenvalue of
\(M_{g_0}\) at the interior witness. Hence, \(G_L\) remains non-log-convex
in the negative alternative.
In the positive alternative, both \(g_0\) and \(\Psi_L\) are positive
and log-convex on the unit box. Since positive sums of log-convex
functions are log-convex, \(G_L\) is log-convex there.

Now let \(R=\|y\|_\infty\geq1\). The growth bounds for \(q\), together
with Lemma~\ref{lem:auxiliary-exponential}, give polynomially bounded
constants \(A_0\) and \(A_1\) such that
$\|M_{g_0}(y)\|_2\leq A_0e^{2DR}$,
$\|\mathcal{C}(g_0,\Psi_L)(y)\|_2
\leq A_1e^{(D+L+1)R}$.
Using the lower bound on \(M_{\Psi_L}\), we obtain
\[
\lambda_{\min}\!\left(M_{G_L}(y)\right)
\geq
4\delta_L^2e^{2LR}
-A_0e^{2DR}
-\delta_LA_1e^{(D+L+1)R}.
\]
When \(L>D+1\), the positive term grows faster than both potentially
negative terms. It therefore suffices to make this lower bound
nonnegative at \(R=1\).
The quantitative estimates show that an integer \(L=O(\log n)\) and a positive rational \(\delta_L\) of inverse-polynomial magnitude can be chosen to satisfy this condition while preserving the interior negative-curvature witness.

It remains to establish global positivity in both alternatives. For some polynomially bounded
constant \(C_0\), we have
$G_L(y)
\geq
1-\varepsilon C_0e^{DR}
+2\delta_Le^{LR}$.
Because \(L>D\), the auxiliary term grows faster than the potentially
negative contribution from \(q\), so positivity outside the unit box can likewise be enforced at \(R=1\). 
The same \(L\) and \(\delta_L\) chosen above also satisfy this condition.
Together with \(g_0\ge\frac12\) and \(\Psi_L>0\) on the unit box, this proves that \(G_L>0\) on \(\mathbb{R}^n\) in both alternatives.

Consequently, \(G_L\) is globally log-convex in the positive alternative and remains positive but non-log-convex in the negative alternative. Since \(G_L\) is the exponential form of a signomial and \(L,\delta_L\) have polynomial encoding length, this gives a polynomial-time reduction from \GQGC{}, proving strong NP-hardness.
\end{proof}

\section{Conclusion}

We established that recognizing convexity, log-coordinate convexity, and log-log convexity of signomials is strongly NP-hard, both on compact domains and globally. Our reductions transfer inverse-polynomial curvature gaps from low-degree polynomials through exponential approximations and use auxiliary exponential terms to extend this separation beyond a compact region without destroying an interior nonconvexity witness. Thus, although these properties can enable tractable optimization once certified, no polynomial-time method can recognize them for arbitrary signomials unless \(\mathrm{P}=\mathrm{NP}\). These worst-case results leave room for efficient methods on structured subclasses and motivate sound, efficiently verifiable certificates and systematically improvable approximation hierarchies for recognizing convex structure in signomial models.

\bibliographystyle{plain}
\bibliography{ref}

@article{murray2021signomial,
  title={Signomial and polynomial optimization via relative entropy and partial dualization},
  author={Murray, Riley and Chandrasekaran, Venkat and Wierman, Adam},
  journal={Mathematical Programming Computation},
  volume={13},
  number={2},
  pages={257--295},
  year={2021},
  publisher={Springer}
}

@article{wang2020positivstellensatz,
  title={A {Positivstellensatz} for conditional {SAGE} signomials},
  author={Wang, Allen Houze and Jaini, Priyank and Yu, Yaoliang and Poupart, Pascal},
  journal={arXiv preprint arXiv:2003.03731},
  year={2020}
}

@article{dressler2022algebraic,
  title={Algebraic perspectives on signomial optimization},
  author={Dressler, Mareike and Murray, Riley},
  journal={SIAM Journal on Applied Algebra and Geometry},
  volume={6},
  number={4},
  pages={650--684},
  year={2022},
  publisher={SIAM}
}

@article{murray2021newton,
  title={Newton polytopes and relative entropy optimization},
  author={Murray, Riley and Chandrasekaran, Venkat and Wierman, Adam},
  journal={Foundations of Computational Mathematics},
  volume={21},
  number={6},
  pages={1703--1737},
  year={2021},
  publisher={Springer}
}

@article{ahmadi2013np,
  title={{NP}-hardness of deciding convexity of quartic polynomials and related problems},
  author={Ahmadi, Amir Ali and Olshevsky, Alex and Parrilo, Pablo A and Tsitsiklis, John N},
  journal={Mathematical Programming},
  volume={137},
  number={1},
  pages={453--476},
  year={2013},
  publisher={Springer}
}

@article{ahmadi2020complexity,
  title={On the complexity of detecting convexity over a box},
  author={Ahmadi, Amir Ali and Hall, Georgina},
  journal={Mathematical Programming},
  volume={182},
  number={1},
  pages={429--443},
  year={2020},
  publisher={Springer}
}

@article{chandrasekaran2016relative,
  title={Relative entropy relaxations for signomial optimization},
  author={Chandrasekaran, Venkat and Shah, Parikshit},
  journal={SIAM Journal on Optimization},
  volume={26},
  number={2},
  pages={1147--1173},
  year={2016},
  publisher={SIAM}
}

@article{ling2010biquadratic,
  title={Biquadratic optimization over unit spheres and semidefinite programming relaxations},
  author={Ling, Chen and Nie, Jiawang and Qi, Liqun and Ye, Yinyu},
  journal={SIAM Journal on Optimization},
  volume={20},
  number={3},
  pages={1286--1310},
  year={2010},
  publisher={SIAM}
}

@article{motzkin1965maxima,
  title={Maxima for graphs and a new proof of a theorem of {Tur{\'a}n}},
  author={Motzkin, Theodore S and Straus, Ernst G},
  journal={Canadian Journal of Mathematics},
  volume={17},
  pages={533--540},
  year={1965},
  publisher={Cambridge University Press}
}

@article{boyd2007tutorial,
  title={A tutorial on geometric programming},
  author={Boyd, Stephen and Kim, Seung-Jean and Vandenberghe, Lieven and Hassibi, Arash},
  journal={Optimization and Engineering},
  volume={8},
  number={1},
  pages={67--127},
  year={2007},
  publisher={Springer}
}

@article{karcher2023data,
  title={Data fitting with signomial programming compatible difference of convex functions},
  author={Karcher, Cody J},
  journal={Optimization and Engineering},
  volume={24},
  number={2},
  pages={973--987},
  year={2023},
  publisher={Springer}
}

@article{agrawal2019disciplined,
  title={Disciplined geometric programming},
  author={Agrawal, Akshay and Diamond, Steven and Boyd, Stephen},
  journal={Optimization Letters},
  volume={13},
  number={5},
  pages={961--976},
  year={2019},
  publisher={Springer}
}

@article{lee2014sparse,
  title={The sparse signomial classification and regression model},
  author={Lee, Kyungsik and Kim, Norman and Jeong, Myong K},
  journal={Annals of Operations Research},
  volume={216},
  number={1},
  pages={257--286},
  year={2014},
  publisher={Springer}
}

@article{lumadjeng2026ecsel,
  title={{ECSEL}: Explainable Classification via Signomial Equation Learning},
  author={Lumadjeng, Adia and Birbil, Ilker and Acar, Erman},
  journal={arXiv preprint arXiv:2601.21789},
  year={2026}
}

@book{bhatia2013matrix,
  title={Matrix analysis},
  author={Bhatia, Rajendra},
  year={2013},
  publisher={Springer Science \& Business Media}
}

@article{bjork2002global,
  title={Global optimization of heat exchanger network synthesis problems with and without the isothermal mixing assumption},
  author={Bj{\"o}rk, Kaj-Mikael and Westerlund, Tapio},
  journal={Computers \& Chemical Engineering},
  volume={26},
  number={11},
  pages={1581--1593},
  year={2002},
  publisher={Elsevier}
}

@article{chiang2007power,
  title={Power control by geometric programming},
  author={Chiang, Mung and Tan, Chee Wei and Palomar, Daniel P and O'Neill, Daniel and Julian, David},
  journal={IEEE Transactions on Wireless Communications},
  volume={6},
  number={7},
  pages={2640--2651},
  year={2007},
  publisher={IEEE}
}

@article{chiu2007optimal,
  title={Optimal production lot sizing with rework, scrap rate, and service level constraint},
  author={Chiu, Singa Wang and Ting, Chia-Kuan and Chiu, Yuan-Shyi Peter},
  journal={Mathematical and Computer Modelling},
  volume={46},
  number={3-4},
  pages={535--549},
  year={2007},
  publisher={Elsevier}
}

@article{fevotte2011algorithms,
  title={Algorithms for nonnegative matrix factorization with the $\beta$-divergence},
  author={F{\'e}votte, C{\'e}dric and Idier, J{\'e}r{\^o}me},
  journal={Neural Computation},
  volume={23},
  number={9},
  pages={2421--2456},
  year={2011},
  publisher={MIT Press}
}

@article{kirschen2018application,
  title={Application of signomial programming to aircraft design},
  author={Kirschen, Philippe G and York, Martin A and Ozturk, Berk and Hoburg, Warren W},
  journal={Journal of Aircraft},
  volume={55},
  number={3},
  pages={965--987},
  year={2018},
  publisher={American Institute of Aeronautics and Astronautics}
}

@article{maranas1997global,
  title={Global optimization in generalized geometric programming},
  author={Maranas, Costas D and Floudas, Christodoulos A},
  journal={Computers \& Chemical Engineering},
  volume={21},
  number={4},
  pages={351--369},
  year={1997},
  publisher={Elsevier}
}

@article{marin2007optimization,
  title={Optimization of biotechnological systems through geometric programming},
  author={Marin-Sanguino, Alberto and Voit, Eberhard O and Gonzalez-Alcon, Carlos and Torres, Nestor V},
  journal={Theoretical Biology and Medical Modelling},
  volume={4},
  number={1},
  pages={38},
  year={2007},
  publisher={Springer}
}

@article{san2022inventory,
  title={An inventory model for multiple items assuming time-varying demands and limited storage},
  author={San-Jos{\'e}, Luis A and Gonz{\'a}lez-De-la-Rosa, Manuel and Sicilia, Joaqu{\'\i}n and Febles-Acosta, Jaime},
  journal={Optimization Letters},
  volume={16},
  number={6},
  pages={1935--1961},
  year={2022},
  publisher={Springer}
}

@article{zimmert2021tsallis,
  title={Tsallis-{INF}: An optimal algorithm for stochastic and adversarial bandits},
  author={Zimmert, Julian and Seldin, Yevgeny},
  journal={Journal of Machine Learning Research},
  volume={22},
  number={28},
  pages={1--49},
  year={2021}
}

@article{murty1987some,
  title={Some {NP}-complete problems in quadratic and nonlinear programming},
  author={Murty, Katta G and Kabadi, Santosh N},
  journal={Mathematical Programming},
  volume={39},
  number={2},
  pages={117--129},
  year={1987},
  publisher={Springer Berlin Heidelberg Berlin/Heidelberg}
}

@article{udrescu2020ai,
  title={{AI} {F}eynman: A physics-inspired method for symbolic regression},
  author={Udrescu, Silviu-Marian and Tegmark, Max},
  journal={Science Advances},
  volume={6},
  number={16},
  pages={eaay2631},
  year={2020},
  publisher={American Association for the Advancement of Science}
}

@article{hwang2015multi,
  title={Multi-class classification using a signomial function},
  author={Hwang, Kyoungmi and Lee, Kyungsik and Lee, Chungmok and Park, Sungsoo},
  journal={Journal of the Operational Research Society},
  volume={66},
  number={3},
  pages={434--449},
  year={2015},
  publisher={Taylor \& Francis}
}

@incollection{grant2006disciplined,
  title={Disciplined convex programming},
  author={Grant, Michael and Boyd, Stephen and Ye, Yinyu},
  booktitle={Global optimization: From theory to implementation},
  pages={155--210},
  year={2006},
  publisher={Springer}
}

\appendix
\section{Proofs for Section~\ref{sec:convexity-problems-and-gap-reductions}}
\label{app:gap-proofs}

\subsection{Global Quartic Gap-Convexity (\GQGC{}) Hardness}
\label{app:quartic-gap-proof}

\begin{proof}
We combine the biquadratic hardness construction of
\cite{ling2010biquadratic} with the quartic convexity reduction of
\cite[Theorem~2.3]{ahmadi2013np}.

Let $(G,K)$ be an instance of \textsc{Independent Set}, where $G$ has
$m$ vertices and $2 \le K \le m$, and set $\tau=K-1$.
The construction below has ambient dimension $n=2m$.
Let $\bar G$ denote the complement of $G$. Following
\cite{ling2010biquadratic}, define
\[
    b_{\bar G}(x;y)
    =
    -2\sum_{(i,j)\in E(\bar G)} x_i x_j y_i y_j .
\]
Let $\operatorname{ind}(G)$ denote the independence number of $G$.
By the Motzkin--Straus identity~\citep{motzkin1965maxima},
\(\min_{\|x\|_2=\|y\|_2=1}b_{\bar G}(x;y)
=-1+1/\operatorname{ind}(G)\).

Now set
\[
    b(x;y) = \tau b_{\bar G}(x;y)
    +
    (\tau-1)\|x\|_2^2\|y\|_2^2 .
\]
On the product of unit spheres,
\(\min b=\tau/\operatorname{ind}(G)-1\).
Hence, if \(\operatorname{ind}(G)<K\), then
\(\operatorname{ind}(G)\le\tau\), so \(b\ge0\) on the product of unit
spheres and therefore globally, by biquadratic homogeneity.
If \(\operatorname{ind}(G)\ge K=\tau+1\), there instead exist unit
vectors \(\bar x,\bar y\) such that
\(b(\bar x;\bar y)\le-1/K\).

We next apply the quartic construction of
\cite[Theorem~2.3]{ahmadi2013np}. Let \(C_b(x,y)\) be the matrix of
mixed second derivatives of \(b\), and let \(\Lambda_b\) be the largest
absolute coefficient appearing in its entries. Define
\[
\begin{aligned}
    P_b(x,y)
    =
    b(x;y)
    +
    \frac{m^2\Lambda_b}{2}
    \Bigg(
        \sum_{i=1}^m x_i^4
        +
        \sum_{i=1}^m y_i^4
        +
        \sum_{i<j}x_i^2x_j^2
        +
        \sum_{i<j}y_i^2y_j^2
    \Bigg).
\end{aligned}
\]
Their construction shows that \(b\) is globally nonnegative if and only
if \(P_b\) is globally convex.

Consider first the case \(\operatorname{ind}(G)\ge K\).
Let \(w_0=(\bar x,0)\) and \(d=(0,\bar y)\), so that
\(\|d\|_2=1\). The quartic construction gives
\[
    d^\top\nabla^2P_b(w_0)d
    =
    2b(\bar x;\bar y)
    \le
    -\frac{2}{K}.
\]
Since \(P_b\) is homogeneous of degree four, its Hessian is homogeneous
of degree two. Thus, with
\(\widehat w=w_0/2\in[-1/2,1/2]^n\),
\[
    d^\top\nabla^2P_b(\widehat w)d
    \le
    -\frac{1}{2K}.
\]

To create a uniform positive margin in the other alternative, define
\[
    p(w)
    =
    P_b(w)
    +
    \frac{1}{8K}\|w\|_2^2 .
\]
If \(\operatorname{ind}(G)<K\), then \(P_b\) is globally convex, and
therefore
\[
    \nabla^2p(w)
    \succeq
    \frac{1}{4K}I_n
    \qquad
    \text{for every }w\in\mathbb{R}^n.
\]
If \(\operatorname{ind}(G)\ge K\), then at the interior witness
\(\widehat w\),
\[
    d^\top\nabla^2p(\widehat w)d
    \le
    -\frac{1}{2K}
    +
    \frac{1}{4K}
    =
    -\frac{1}{4K}.
\]
By Rayleigh--Ritz,
\(\lambda_{\min}(\nabla^2p(\widehat w))\le-1/(4K)\).

Thus the two alternatives have the common margin
\(\gamma_+=\gamma_-=1/(4K)\), which has inverse-polynomial magnitude
because \(K\le m\). 
Moreover, \(p\) has degree at most four, contains only polynomially many
variables and monomials, and has rational coefficients whose numerators
and denominators are polynomially bounded in the input size. Hence \(p\)
has polynomial encoding length and can be constructed in polynomial
time. It follows that \GQGC{} is strongly NP-hard.
\end{proof}

\subsection{Proof of Lemma~\ref{lem:exponential-curvature-approximation}}
\label{app:exponential-approximation-proof}

\begin{proof}
Write
\[
    p(\xi)=\sum_{|\alpha|\le d}p_\alpha \xi^\alpha,
    \qquad
    S_p=\sum_{|\alpha|\le d}|p_\alpha|.
\]
We first verify the exponential representation. For each multi-index
\(\alpha\),
\[
\begin{aligned}
    \Phi_h(y)^\alpha
    =
    h^{-|\alpha|}
    \prod_{i=1}^n(e^{hy_i}-1)^{\alpha_i} =
    h^{-|\alpha|}
    \sum_{0\le\ell\le\alpha}
    \binom{\alpha}{\ell}
    (-1)^{|\alpha|-|\ell|}
    e^{h\ell^\top y}.
\end{aligned}
\]
Since \(|\alpha|\le d\), summing over the monomials of \(p\) gives
\(q_h(y)=\sum_{|\ell|\le d}c_\ell e^{h\ell^\top y}\) for rational
coefficients \(c_\ell\).

We next bound the Hessian perturbation. On \([-2,2]^n\), termwise
differentiation gives
\(\|\nabla p(\xi)\|_\infty\le G_p\) and
\(\|\nabla^2p(\xi)\|_2\le H_p\), where
\(G_p=d2^dS_p\) and \(H_p=nd^22^dS_p\).
Moreover, every third partial derivative is bounded in absolute value by
\(d^32^dS_p\). Applying the fundamental theorem of calculus along the
line segment joining \(\xi,\zeta\in[-2,2]^n\), together with
\(\|A\|_2\le\|A\|_F\le n\max_{i,j}|A_{ij}|\), gives
\[
    \|\nabla^2p(\xi)-\nabla^2p(\zeta)\|_2
    \le
    L_p\|\xi-\zeta\|_\infty,
    \qquad
    L_p=n^2d^32^dS_p.
\]

For \(|t|\le1\) and \(0<h\le\frac12\),
\(|\phi_h(t)-t|\le h\),
\(|\phi_h'(t)-1|\le2h\), and
\(|\phi_h''(t)|\le2h\).
Hence, for \(y\in[-1,1]^n\),
\(\|\Phi_h(y)-y\|_\infty\le h\),
\(\Phi_h(y)\in[-2,2]^n\), and, with
\(J_h(y)=\operatorname{Diag}(e^{hy_1},\ldots,e^{hy_n})\),
\(\|J_h(y)-I_n\|_2\le2h\) and
\(\|J_h(y)\|_2\le2\).

Using the decomposition in
\autoref{eq:hessian-error-decomp}, the Jacobian error satisfies
\[
\begin{aligned}
&
\left\|
J_h^\top\nabla^2p(\Phi_h)J_h
-\nabla^2p(\Phi_h)
\right\|_2 \\
&\qquad\le
\|J_h-I_n\|_2
\|\nabla^2p(\Phi_h)\|_2
\|J_h\|_2
+
\|\nabla^2p(\Phi_h)\|_2
\|J_h-I_n\|_2
\le 6H_ph.
\end{aligned}
\]
The argument error is at most
\(L_p\|\Phi_h(y)-y\|_\infty\le L_ph\).
Since the remaining term in
\autoref{eq:hessian-error-decomp} is diagonal, its spectral norm is at
most \(2G_ph\). Therefore
\[
    \|\nabla^2q_h(y)-\nabla^2p(y)\|_2
    \le
    (6H_p+L_p+2G_p)h
\]
uniformly on \([-1,1]^n\). Thus we may take
\(C_{\mathrm{err}}(p)=6H_p+L_p+2G_p\), which is rational,
computable in polynomial time from \(p\), and polynomially bounded in
the input size.

It remains to verify the encoding claim. Since \(d\) is fixed, the
expansion above contains only \(O(n^d)\) exponent vectors. Each
coefficient \(c_\ell\) is a finite sum of terms involving
\(p_\alpha\), binomial coefficients, and powers \(h^{-|\alpha|}\).
Hence, if \(h\) has polynomially bounded rational data, then so do the
coefficients \(c_\ell\) and exponents \(h\ell\). Thus \(q_h\) satisfies
the required encoding bounds.
\end{proof}

\section{Proofs for Section~\ref{sec:unconstrained-hardness}}
\label{app:global-proofs}

\subsection{Proof of Lemma~\ref{lem:auxiliary-exponential}}
\label{app:auxiliary-exponential-proof}

\begin{proof}
Positivity is immediate. Each summand of \(\Psi_L\) is the exponential
of an affine function and is therefore convex, so \(\Psi_L\) is globally
convex. Moreover, \(\log\Psi_L\) is a log-sum-exp of affine functions,
hence \(\Psi_L\) is globally log-convex.

Fix \(y\in\mathbb{R}^n\) and write \(R=\|y\|_\infty\). Choose an index
\(i\) and a sign \(s\in\{-1,1\}\) such that \(s y_i=R\).
For arbitrary \(z\in\mathbb{R}^n\) and any \(j\), pairing the two
terms corresponding to \(t=\pm1\) gives
\[
\begin{aligned}
&
e^{sLy_i+y_j}(sLz_i+z_j)^2
+
e^{sLy_i-y_j}(sLz_i-z_j)^2                                      \\
&\qquad\ge
e^{LR-|y_j|}
\left[
    (sLz_i+z_j)^2+(sLz_i-z_j)^2
\right]                                                          \\
&\qquad=
2e^{LR-|y_j|}
\left(L^2z_i^2+z_j^2\right)                                     \\
&\qquad\ge
2e^{(L-1)R}
\left(L^2z_i^2+z_j^2\right),
\end{aligned}
\]
where the last inequality uses \(|y_j|\le R\).
Summing over \(j\) and discarding the nonnegative terms involving
\(L^2z_i^2\) yields
\[
    z^\top\nabla^2\Psi_L(y)z
    \ge
    2e^{(L-1)R}\|z\|_2^2.
\]
Since it holds for every \(z\), we obtain
\(\nabla^2\Psi_L(y)\succeq2e^{(L-1)R}I_n\), thus
$\lambda_{\min}\left(\nabla^2\Psi_L(y)\right)\geq 2e^{(L-1)R}$.

We next record the required upper bounds. Every exponent vector appearing
in \(\Psi_L\) has \(\ell_1\)-norm at most \(L+1\), and there are
\(4n^2\) summands. Since the absolute value of every exponent is at most
\((L+1)R\), termwise differentiation gives
\(\Psi_L(y)\le4n^2e^{(L+1)R}\),
\(\|\nabla\Psi_L(y)\|_2\le4n^2(L+1)e^{(L+1)R}\), and
\(\|\nabla^2\Psi_L(y)\|_2
\le4n^2(L+1)^2e^{(L+1)R}\).
Thus the first group of upper bounds in the lemma holds, for example,
with \(P_1(n,L)=4n^2(L+1)^2\).

It remains to bound the log-convexity matrix \(M_{\Psi_L}\). For
\(g(y)=\sum_a c_a e^{a^\top y}\), direct expansion of
\(M_g=g\nabla^2g-\nabla g\nabla g^\top\) gives
\[
    M_g(y)
    =
    \frac12
    \sum_{a,b}
    c_a c_b e^{(a+b)^\top y}
    (a-b)(a-b)^\top .
\]
For each \(j\), the support of \(\Psi_L\) contains the two exponent
vectors \(sLe_i+e_j\) and \(sLe_i-e_j\). Their difference is \(2e_j\),
while their sum has inner product \(2LR\) with \(y\). The two ordered
pairs formed from these vectors therefore contribute
\(4e^{2LR}e_je_j^\top\) to \(M_{\Psi_L}(y)\). Summing over \(j\) gives
\[
    M_{\Psi_L}(y)
    \succeq
    4e^{2LR}I_n,
\]
and hence
$\lambda_{\min}\left(M_{\Psi_L}(y)\right)\geq 4e^{2LR}$.

Finally, using
\(\|M_{\Psi_L}\|_2
\le\Psi_L\|\nabla^2\Psi_L\|_2+\|\nabla\Psi_L\|_2^2\)
together with the bounds above gives
\(\|M_{\Psi_L}(y)\|_2
\le32n^4(L+1)^2e^{2(L+1)R}\).
Hence the second upper bound holds with
\(P_2(n,L)=32n^4(L+1)^2\).
\end{proof}

\subsection{Proof of Theorem~\ref{thm:global-log-coordinate}}
\label{app:global-log-coordinate-proof}

\begin{proof}
Let
\(q(y)=\sum_{|\ell|\le4}c_\ell e^{h\ell^\top y}\)
be the hard exponential form with curvature margin
\(\gamma>0\). Recall that
\(D=\max_{\alpha\in\operatorname{supp}(q)}\|\alpha\|_1\le2\).
Let
\(C_2=\sum_{|\ell|\le4}|c_\ell|\|h\ell\|_2^2\).
Then, writing \(R=\|y\|_\infty\),
\(\|\nabla^2q(y)\|_2\le C_2e^{DR}\).

We construct
\(Q_L(y)=q(y)+\delta_L\Psi_L(y)\), where
\(\Psi_L\) is the auxiliary function from
Lemma~\ref{lem:auxiliary-exponential}. We first choose \(\delta_L\) to
preserve the interior negative-curvature witness, and then choose \(L\)
so that the auxiliary curvature dominates outside the unit box.

On \([-\frac12,\frac12]^n\),
Lemma~\ref{lem:auxiliary-exponential} and the explicit bound from its
proof give
\[\|\nabla^2\Psi_L(y)\|_2\le 4n^2(L+1)^2e^{(L+1)/2}.\]
Using \(\sqrt e<5/3\), let
\[
    U_L
    =
    4n^2(L+1)^2
    \left(\frac53\right)^{L+1},
    \qquad
    \delta_L
    =
    \frac{\gamma}{2U_L}.
\]
Then
\(\delta_L\|\nabla^2\Psi_L(y)\|_2\le\gamma/2\) throughout the witness box.

In the negative alternative, let
\(\widehat y\in[-\frac12,\frac12]^n\) and a unit vector \(z\) satisfy
\(z^\top\nabla^2q(\widehat y)z\le-\gamma\). The preceding choice of
\(\delta_L\) gives
\[
\begin{aligned}
    z^\top\nabla^2Q_L(\widehat y)z
    =
    z^\top\nabla^2q(\widehat y)z
    +
    \delta_L z^\top\nabla^2\Psi_L(\widehat y)z
    \le
    -\gamma
    +
    \delta_L\|\nabla^2\Psi_L(\widehat y)\|_2
    \le
    -\frac{\gamma}{2}.
\end{aligned}
\]
Thus the strict negative-curvature witness is preserved.

Now consider the positive alternative. On \([-1,1]^n\),
\(\nabla^2q(y)\succeq\gamma I_n\), and \(\Psi_L\) is globally convex, so
\(Q_L\) is convex on the unit box. For \(R\ge1\),
\(\|\nabla^2q(y)\|_2\le C_2e^{DR}\) implies
\(\nabla^2q(y)\succeq-C_2e^{DR}I_n\), while
Lemma~\ref{lem:auxiliary-exponential} gives
\(\nabla^2\Psi_L(y)\succeq2e^{(L-1)R}I_n\). Hence
\[
    \nabla^2Q_L(y)
    \succeq
    \left(
        2\delta_Le^{(L-1)R}
        -
        C_2e^{DR}
    \right)I_n .
\]
We choose \(L\) so that the auxiliary term already dominates at
\(R=1\). Let \(L\ge4\) be the smallest integer satisfying
\[ 
\left(\frac85\right)^L \ge
    \frac{160n^2(L+1)^2C_2}{\gamma}.
\]

Such an \(L\) exists because the left-hand side grows exponentially,
whereas the dependence on \(L\) on the right is polynomial. For the
source family here, \(C_2/\gamma\) is polynomially bounded in \(n\), so
\(L=O(\log n)\).

Indeed, substituting the choices of \(U_L\) and \(\delta_L\), and using
\(8/3<e<3\), gives
\[
\begin{aligned}
    2\delta_Le^{L-1}
    =
    \frac{\gamma e^{L-1}}
    {4n^2(L+1)^2(5/3)^{L+1}} 
    =
    \frac{3\gamma}
    {20e\,n^2(L+1)^2}
    \left(\frac{3e}{5}\right)^L
    >
    \frac{\gamma}
    {20n^2(L+1)^2}
    \left(\frac85\right)^L
    \ge
    8C_2.
\end{aligned}
\]
Since \(D\le2\) and \(e^2<8\), this implies
\(2\delta_Le^{L-1}>C_2e^D\). Moreover,
\(L\ge4\) and \(D\le2\) give \(L-1-D>0\), so the ratio
\(2\delta_Le^{(L-1)R}/(C_2e^{DR})\) increases with \(R\).
As it exceeds one at \(R=1\), we obtain
\(\nabla^2Q_L(y)\succeq0\) for every \(R\ge1\).

Consequently, \(Q_L\) is globally convex in the positive alternative
and remains nonconvex in the negative alternative. Since \(Q_L\) is the
exponential form of a signomial, these alternatives are exactly global
log-coordinate convexity and nonconvexity.

Finally, \(L=O(\log n)\), so the auxiliary exponents and the
\(4n^2\) added terms satisfy the encoding bounds. Moreover,
\(\delta_L=\gamma/(2U_L)\) has polynomially bounded rational data.
Hence the reduction preserves strong NP-hardness.
\end{proof}

\subsection{Proof of Theorem~\ref{thm:global-log-log-convexity-hardness}}
\label{app:global-loglog-proof}

\begin{proof}
Let
\(q(y)=\sum_{|\ell|\le4}c_\ell e^{h\ell^\top y}\)
be the hard exponential form with curvature margin
\(\gamma>0\). Recall that
\(D=\max_{\alpha\in\operatorname{supp}(q)}\|\alpha\|_1\le2\).
We first apply the same scaling and offset used in the compact log-log
reduction. Let \(C_q=\sum_{|\ell|\le4}|c_\ell|\). Since
\(h\le\frac12\), \(|\ell|\le4\), and
\(\|y\|_\infty\le1\), we have
\(|q(y)|\le8C_q\) and
\(\|\nabla q(y)\|_2\le16C_q\) on the unit box.
Let \(B_0=1+8C_q\), \(B_1=1+16C_q\), and choose
\(\varepsilon=\min\{1/(2B_0),\,\gamma/(4B_1^2)\}\).
Set \(g_0(y)=1+\varepsilon q(y)\). Then
\(g_0\ge\frac12\) on the unit box and
\[
    M_{g_0}
    =
    \varepsilon g_0\nabla^2q
    -
    \varepsilon^2\nabla q\nabla q^\top .
\]
In the positive alternative,
\(M_{g_0}(y)\succeq(\varepsilon\gamma/4)I_n\) throughout the unit box.
In the negative alternative, at the witness \(\widehat y\),
\[
    z^\top M_{g_0}(\widehat y)z
    \le
    -\frac{\varepsilon\gamma}{2}.
\]
Let
\(\nu=\min\{1,\varepsilon\gamma/2\}\).
Thus \(g_0\) is positive and log-convex on the unit box in the positive
alternative, while in the negative alternative it has a strict
\(M_{g_0}\)-curvature witness of margin at least \(\nu\).
Since \(\varepsilon\) and \(\gamma\) have inverse-polynomial magnitude,
so does \(\nu\).

Write
\(g_0(y)=\sum_{|\ell|\le4}b_\ell e^{h\ell^\top y}\), absorbing the
constant term into \(b_0\). Let
\(K_0=\sum|b_\ell|\),
\(K_1=\sum|b_\ell|\|h\ell\|_1\), and
\(K_2=\sum|b_\ell|\|h\ell\|_1^2\).
For \(R=\|y\|_\infty\),
\(|g_0(y)|\le K_0e^{DR}\),
\(\|\nabla g_0(y)\|_2\le K_1e^{DR}\), and
\(\|\nabla^2g_0(y)\|_2\le K_2e^{DR}\).
Consequently,
\(\|M_{g_0}(y)\|_2\le K_Me^{2DR}\), where
\(K_M=K_0K_2+K_1^2\).

Now define
\(G_L(y)=g_0(y)+\delta_L\Psi_L(y)\).
For twice differentiable functions \(u,v\), let
\(\mathcal C(u,v)
=u\nabla^2v+v\nabla^2u
-\nabla u\nabla v^\top-\nabla v\nabla u^\top\).
Then
\[
    M_{G_L}
    =
    M_{g_0}
    +
    \delta_L\mathcal C(g_0,\Psi_L)
    +
    \delta_L^2M_{\Psi_L}.
\]
Using the bounds from Lemma~\ref{lem:auxiliary-exponential} gives
\[
    \|\mathcal C(g_0,\Psi_L)(y)\|_2
    \le
    K_Le^{(D+L+1)R},
\]
where
\(
K_L=
4n^2\bigl(
K_0(L+1)^2+K_2+2K_1(L+1)
\bigr)
\).

We first choose \(\delta_L\) to preserve the interior witness.
On \([-\frac12,\frac12]^n\), the bound \(D\le2\) gives
\(\|\mathcal C(g_0,\Psi_L)\|_2
\le K_Le^{(L+3)/2}\).
Using \(\sqrt e<5/3\) and \(e^{3/2}<5\), let
\(V_L=5K_L(5/3)^L\), so that the cross term is bounded by \(V_L\)
throughout the witness box. On the same box,
Lemma~\ref{lem:auxiliary-exponential} gives
\(\|M_{\Psi_L}\|_2
\le32n^4(L+1)^2e^{L+1}\).
Let
\(S_L=10n^2(L+1)(5/3)^L\); then
\(\|M_{\Psi_L}\|_2\le S_L^2\).

Set
\(T_L=\max\{1,V_L,S_L\}\) and
\(\delta_L=\nu/(8T_L)\).
Then
\(\delta_LV_L\le\nu/8\) and
\(\delta_L^2S_L^2\le\nu^2/64\le\nu/64\).
At the negative witness,
\[
\begin{aligned}
    z^\top M_{G_L}(\widehat y)z
    \le
    -\nu+\delta_LV_L+\delta_L^2S_L^2 
    \le
    -\nu+\frac{\nu}{8}+\frac{\nu}{64}
    <
    -\frac{\nu}{2}.
\end{aligned}
\]
Moreover,
\(G_L(\widehat y)>0\) because
\(g_0(\widehat y)\ge\frac12\) and \(\Psi_L>0\).
Hence
\(z^\top\nabla^2\log G_L(\widehat y)z<0\), so the negative
alternative is preserved.

It remains to choose \(L\) so that \(G_L\) is globally positive and,
in the positive alternative, globally log-convex. Let
\(P_L=1+5K_L+10n^2(L+1)\). Since
\(T_L\le P_L(5/3)^L\), we have
\(\delta_L\ge\frac{\nu}{8P_L}(3/5)^L\).

We choose \(L\ge4\) to satisfy three requirements. The first makes the
auxiliary term dominate \(g_0\) outside the unit box, the second makes
\(\delta_L^2M_{\Psi_L}\) dominate the cross term, and the third makes
it dominate \(M_{g_0}\). Concretely, choose \(L\) as the smallest
integer satisfying
\[
\begin{aligned}
    \left(\frac85\right)^L
    &\ge
    \frac{128P_LK_0}{\nu}
    &&\text{(global positivity)},\\
    \left(\frac85\right)^L
    &\ge
    \frac{84P_LK_L}{\nu}
    &&\text{(cross-term domination)},\\
    \left(\frac{64}{25}\right)^L
    &\ge
    \frac{1760P_L^2K_M}{\nu^2}
    &&\text{(source-curvature domination)}.
\end{aligned}
\]
Such an \(L\) exists because \(P_L\) and \(K_L\) grow only
polynomially in \(L\) and \(n\), whereas the left-hand sides grow
exponentially. Hence \(L=O(\log n)\).

We first verify global positivity. For \(R\ge1\), choose \(i,s\) with
\(sy_i=R\), and choose \(t\) so that \(ty_j\ge0\) for any fixed \(j\).
The corresponding summand of \(\Psi_L\) is at least \(e^{LR}\), so
\(\Psi_L(y)\ge e^{LR}\). On the other hand,
\(|g_0(y)|\le K_0e^{DR}\).
At \(R=1\), the lower bound on \(\delta_L\), the global-positivity
condition above, and \(3e/5>8/5\) give
\[
\begin{aligned}
    \delta_Le^L
    \ge
    \frac{\nu}{8P_L}
    \left(\frac{3e}{5}\right)^L 
    >
    \frac{\nu}{8P_L}
    \left(\frac85\right)^L
    \ge
    16K_0.
\end{aligned}
\]
Since \(D\le2\) and \(e^2<8\), we have
\(16K_0>K_0e^D\). Thus
\(\delta_Le^L>K_0e^D\) at \(R=1\).
Because \(L>D\), the ratio
\(\delta_Le^{LR}/(K_0e^{DR})\) increases with \(R\), so
\(\delta_Le^{LR}>K_0e^{DR}\) for every \(R\ge1\).
Hence \(G_L(y)>0\) outside the unit box. Together with
\(g_0\ge\frac12\) and \(\Psi_L>0\) on the unit box, this proves that
\(G_L\) is globally positive in both alternatives.

Now suppose the positive alternative holds. On the unit box,
\(g_0\) and \(\Psi_L\) are positive and log-convex,
so their positive sum \(G_L=g_0+\delta_L\Psi_L\) is log-convex there.

For \(R\ge1\), the bounds above and
Lemma~\ref{lem:auxiliary-exponential} give
\[
    M_{G_L}(y)
    \succeq
    \left(
        4\delta_L^2e^{2LR}
        -
        \delta_LK_Le^{(D+L+1)R}
        -
        K_Me^{2DR}
    \right)I_n.
\]

We first compare the positive auxiliary term with the cross term.
At \(R=1\),
\[
\begin{aligned}
    \frac{
        4\delta_L^2e^{2L}
    }{
        \delta_LK_Le^{D+L+1}
    }
    =
    \frac{4\delta_Le^{L-D-1}}{K_L} 
    \ge
    \frac{\nu}{2P_LK_Le^{D+1}}
    \left(\frac{3e}{5}\right)^L 
    >
    \frac{\nu}{42P_LK_L}
    \left(\frac85\right)^L
    \ge2,
\end{aligned}
\]
where \(D\le2\), \(e^3<21\), and the last step uses the
cross-term domination condition.

Similarly, comparing the positive auxiliary term with
\(M_{g_0}\) gives, at \(R=1\),
\[
\begin{aligned}
    \frac{
        4\delta_L^2e^{2L}
    }{
        K_Me^{2D}
    }
    \ge
    \frac{\nu^2}{16P_L^2K_Me^{2D}}
    \left(\frac{3e}{5}\right)^{2L} 
    >
    \frac{\nu^2}{880P_L^2K_M}
    \left(\frac{64}{25}\right)^L
    \ge2,
\end{aligned}
\]
where \(D\le2\), \(e^4<55\), and the last step uses the
source-curvature domination condition.

Thus, at \(R=1\), the positive auxiliary term is at least twice each
negative contribution. Moreover, \(L\ge4\) and \(D\le2\) imply
\(2L>D+L+1\) and \(2L>2D\), so both ratios increase with \(R\).
Therefore, for every \(R\ge1\),
\[
    4\delta_L^2e^{2LR}
    \ge
    2\delta_LK_Le^{(D+L+1)R},
    \qquad
    4\delta_L^2e^{2LR}
    \ge
    2K_Me^{2DR}.
\]
It follows that \(M_{G_L}(y)\succeq0\) throughout the exterior region.
Combined with log-convexity on the unit box and the global positivity
proved above, \(G_L\) is globally log-convex in the positive
alternative.

Thus, \(G_L\) is positive on \(\mathbb{R}^n\) in both alternatives. In
the positive alternative, \(\log G_L\) is convex on
\(\mathbb{R}^n\); in the negative alternative, the preserved interior
witness shows that \(\log G_L\) is not convex. Since \(G_L\) is the
exponential form of a signomial, the corresponding signomial satisfies
the positivity promise in both alternatives and distinguishes the two
cases of the unconstrained \LLC{} problem.

Finally, \(L=O(\log n)\), while
\(\varepsilon\), \(\nu\), and \(\delta_L\) have polynomially bounded
rational data, and only \(4n^2\) auxiliary terms are introduced.
Hence the construction preserves the encoding bounds and strong
NP-hardness.
\end{proof}

\end{document}